\newcommand{\cs}{$\clubsuit$} 
\renewcommand{\Im}{{\operatorname{Im}\,}}
\renewcommand{\epsilon}{\varepsilon}
\newcommand{\N}{{\mathbb N}}
\newcommand{\R}{{\mathbb R}}
\newcommand{\Z}{{\mathbb Z}}
\newcommand{\h}{\hbar}
\renewcommand{\phi}{\varphi}
\newcommand{\bcal}{\mathcal{B}}
\newcommand{\ical}{\mathcal{I}}
\newcommand{\lcal}{\mathcal{L}}
\newcommand{\scal}{\mathcal{S}}
\newtheorem{theo}{{\sc Theorem}}[section]
\newtheorem{cor}[theo]{{\sc Corollary}}
\newtheorem{lem}[theo]{{\sc Lemma}}
\newtheorem{prop}[theo]{{\sc Proposition}}
\newenvironment{rem}{\medskip\noindent{\it Remark:\/} }{\medskip}
\newtheorem{defn}[theo]{{\sc Definition}}
\title[Eigenfunction bounds]{Averaged pointwise bounds for deformations of Schr\"{o}dinger eigenfunctions}
\author{Suresh Eswarathasan and John A. Toth}
\address{Department of Mathematics and Statistics, McGill University, Montr\'eal, Canada}
\email{jtoth@math.mcgill.ca} 
\address{Centre de Recherches Mathematiques, Universit\'e de Montr\'eal, Montr\'eal, Canada} 
\address{Department of Mathematics and Statistics, McGill University, Montr\'eal, Canada}
\email{suresh@math.mcgill.ca \\ eswarath@crm.umontreal.ca}
\date{}
\begin{document}

\maketitle
\begin{abstract}  Let $(M,g)$ be an $n$-dimensional, compact Riemannian manifold and $P_0(\h) = -\hbar^2 \Delta_g + V(x)$ be a semiclassical Schr\"odinger operator with $\hbar \in (0,\hbar_0]$.  Let $E(\h) \in  [E-o(1),E+o(1)]$  and $(\phi_{\h})_{\h \in (0,\h_0]}$ be a family of $L^2$-normalized eigenfunctions of  $P_0(\h)$ with $P_0(\h) \phi_{\h} = E(\h) \phi_{\h}.$ We consider magnetic deformations of $P_0(\h)$ of the form $P_u(\h) = - \Delta_{\omega_u}(\hbar) + V(x)$, where $\Delta_{\omega_u}(\hbar) = (\hbar d + i \omega_u(x))^* (\hbar d + i \omega_u(x)).$ Here, $u$ is a $k$-dimensional parameter running over $B^k(\epsilon)$ (the ball of radius $\epsilon$), and the family of the magnetic potentials $(w_u)_{u\in B^k(\epsilon)}$ satisfies the admissibility condition given in Definition 1.1. This condition implies that $k\ge n$ and is generic under this assumption.
 
 Consider the corresponding family of deformations of $(\phi_{\h})_{\h \in (0, \h_0]}$, given by $(\phi^u_{\h})_{\h \in (0, \h_0]}$, where 
$$ \phi_{\h}^{(u)}:= e^{-it_0 P_u(\h)/\h} \phi_{\h}$$
for $|t_0| \in (0, \epsilon)$; the latter functions are themselves eigenfunctions of the $\h$-elliptic operators
$Q_u(\h): = e^{-it_0P_u(\h)/\h} P_0(\h) e^{it_0 P_u(\h)/\h}$ with eigenvalue $E(\h)$ and $Q_0(\h) = P_{0}(\h).$  Our main result, Theorem \ref{supbound}, states that for $\epsilon >0$ small, there are  constants $C_j=C_j(M,V,\omega,\epsilon)>0$ with $j=1,2$ such that 
 $$  C_{1} \leq  \int_{\bcal^k(\epsilon)} |\phi_{\h}^{(u)}(x)|^2 \, du \leq C_{2},$$
 uniformly for $x \in M$ and $\h \in (0,h_0].$  We also give an application to eigenfunction restriction bounds in Theorem \ref{restriction}.

\end{abstract}

\section{Introduction} 
\subsection{Background} 
Let $(M,g)$ be an $n$-dimensional, compact Riemannian manifold with Laplace Beltrami operator $-\Delta_{g}: C^{\infty}(M) \rightarrow C^{\infty}(M)$. We let $ 0 = \lambda_1 \leq \lambda_2 \leq ...$ be the eigenvalues of $- \Delta_g$ and $\{\phi_{\lambda_j}\}_{j \in Z^+}$ be the corresponding $L^{2}$ normalized basis eigenfunctions.  The relationship between the large $\lambda$ asymptotics of the $\phi_{\lambda}$'s and the corresponding dynamics of the geodesic flow $G^{t}: T^*M \rightarrow T^*M$ is a very interesting subject which has been studied a great deal over the past several decades. Nevertheless, many basic questions related to this correspondence are not well understood, especially for eigenfunctions; there are several ways of quantifying this correspondence. One such approach is to classify the possible semiclassical defect measures: Given a pseudodifferential operator of order zero, known in physics as the quantum observable, one studies the possible limit measures $d\mu$ on $S^*M$ satisfying $ \lim_{\lambda \rightarrow \infty} \langle A \phi_{\lambda}, \phi_{\lambda} \rangle = \int_{S^*M} \sigma(A) d\mu.$   Recently, there have been important advances in the understanding of defect measures vis-a-vis quantum unique ergodicity \cite{A,H,L}.  

One important measure of the classical-quantum correspondence, different from that of defect measures, involves computation of $L^{\infty}$ bounds, or more generally $L^{p}$ bounds, of eigenfunctions. The universal estimate $\| \phi_{\lambda} \|_{L^{\infty}} = {\mathcal O}(\lambda^{\frac{n-1}{2}})$ due to Avakumovic, H\"ormander, and Levitan \cite{Av, Ho, Lev} follows from the well-known pointwise asymptotic formula
 $$ e(x,x,\lambda) = (2\pi )^{-n} \text{vol}(M)  \lambda^{n} + {\mathcal O}(\lambda^{n-1}) $$ 
 which is uniform in $x \in M$. Here, $e(x,y,\lambda):= \sum_{\lambda_j \leq \lambda} \phi_{\lambda_j}(x) \overline{\phi_{\lambda_j}(y)}$ is the Schwartz kernel of the spectral projector of $-\Delta_g$. The ${\mathcal O}(\lambda^{\frac{n-1}{2}})$ bound is known to be saturated by the zonal harmonics on ${\mathbb S}^{n}$ at the north and south poles with the round metric. Sogge and Zelditch \cite{SZ} proved that  this maximal bound is only attained at points $z \in M$, where $|\lcal_{z}| >0$ where $\lcal_{z}:= \{ \xi \in S_{z}^*M;   \exists \,  T>0  \, \text{such that}  \, \exp_{z} (T \xi) = z \}$ and $|A|$ denotes the Liouville measure of $A \subset S_{z}^{*}M.$ The latter  theorem  is closely related to earlier work of Safarov \cite{Sa} and is the pointwise analogue of a result of D\"uistermaat and Guillemin \cite{DG} and Ivrii \cite{I} for the Weyl remainder 
 of the integrated spectral function, $N(\lambda) = \int_{M} e(x,x;\lambda) d vol(x).$ More recently, Sogge, Toth, and Zelditch \cite{STZ} improved these general $L^{\infty}$ estimates by replacing the  zero-measure looping condition in \cite{SZ} with the weaker assumption of  recurrence.
 
 It is difficult to improve the general ${\mathcal O}(\lambda^{\frac{n-1}{2}})$ by  polynomial powers of $\lambda$ and there are few rigorous results along these lines; see \cite{Z} for a thorough overview of known results on eigenfunction bounds.  In the quantum completely integrable case, explicit  polynomial improvements in the $L^{\infty}$ bounds for eigenfunctions  are given in \cite{T}.   
For arithmetic hyperbolic surfaces, polynomial improvements were obtained by Iwaniec and Sarnak \cite{IS}.  A conjecture of Sarnak \cite{S} asserts that for hyperbolic surfaces, $\| \phi_{\lambda} \|_{L^{\infty}} = {\mathcal O}_{\epsilon}(\lambda^{\epsilon})$ for any $\epsilon >0$. Both numerical evidence and  Berry's random wave model \cite{Ber} are consistent with such a bound, but rigorous results at this time remain elusive; see the lecture notes of Sarnak \cite{S} for further details on this conjecture and related topics.


  
  In the more general semiclassical setting, Koch, Tataru and Zworski \cite{KTZ} proved sharp $L^p$ eigenfunction estimates for general semiclassical Schr\"odinger operators $P(\h) = -\h^2 \Delta_g + V(x)$, where $\h \in (0,\h_0]$ is the semiclassical parameter. In particular, they prove that
  $\| \phi_\h \|_{L^\infty} = {\mathcal O}(\h^{ \frac{-n-1}{2} })$ extending the homogeneous Avakumovic-H\"{o}rmander-Levitan bound.
  
  One of the main applications of the results in this paper is to eigenfunction restriction bounds; in this case, more is known. Let $H \subset M$ be denote a smooth hypersurface. In the general homogeneous case, Burq, Gerard and Tzvetkov \cite{BGT} gave sharp general upper bounds for $\| \phi_\lambda\|_{L^p(H)}$ which depend on geodesic curvature of $H$; in particular, when $H$ is totally geodesic, the authors show that  $\| \phi_\lambda\|_{L^2(H)} = {\mathcal O}(\lambda^{\frac{n-1}{4}})$ and if $H$ has strictly positive geodesic curvature this bound improves to $\| \phi_\lambda\|_{L^2(H)} = {\mathcal O}(\lambda^{\frac{n-1}{6}}).$  Similar results were obtained independently using the analysis of fold and blowdown singularities of Fourier integral operators by Hu \cite{Hu}.  Recently, Hassell and Tacy generalized these results to the semiclassical setting \cite{HT}.
   
   The article \cite{T2} proved that for generic curves $H$ in the quantum completely integrable case on surfaces, there is a large improvement in the upper bound, explicitly of the form $\| \phi_\lambda\|_{L^2(H)} = {\mathcal O}( \sqrt{\log \lambda})$, which is consistent with the random wave model.  Here, ``genericity" is defined in terms of the associated moment mapping of the integrable system.  At the opposite extreme in \cite{TZ1,TZ2}, it is proved that for generic hypersurfaces $H$ of manifolds with ergodic geodesic flow, along with quantum ergodic sequences of eigenfunctions, one actually has {\em quantum ergodic restriction} in the sense that the asymptotics take the form $\| \phi_\lambda\|_{L^2(H)} \sim 1.$  We show in Theorem \ref{restriction} that for any submanifold $H \subset M$  there is a natural class of unitary perturbations of Schr\"odinger eigenfunctions that satisfy the $\asymp 1$ restriction bound, at least for a positive measure of values of the perturbation parameters.   This is consistent with the ergodic case and our results here can be interpreted as further evidence for the efficacy of the random wave model. 
   
\subsection{Discussion of Results}
 We now describe our results in more detail.  Let $\bcal^{k}(\epsilon) \subset \R^k$ be the $k$-ball of radius $\epsilon >0$ centered at $0 \in \R^k.$ In the following discussion, $(\omega_u)_{u \in \bcal^k(\epsilon)}$  denotes  a smooth $k$-parameter family of $C^{\infty}$ one-forms on $M$. In each local coordinate chart $U \subset \R^n,$ one can write
 \begin{equation} \label{magform}
 \omega_{u}(x)= \sum_{j=1}^{n} \omega_j(x,u) dx_j \text{ for} \,\,\, \omega_j \in C^{\infty}(U \times \bcal^k(\epsilon)), \,\,j=1,...,n. \end{equation}
 Consider the associated gauge-invariant semiclassical differentials 
  $$d_{\omega_u}(\h):=  \h d + i \omega_u: C^{\infty}(M) \rightarrow \Omega^1(M).$$ We assume throughtout that $\omega_0 = 0$, $\h \in (0,\h_0]$ is the semiclassical parameter, and $d_{\omega_u}(\h)^*: \Omega^1(M) \rightarrow C^{\infty}(M)$ is the adjoint induced by the Riemannian metric $g$.  Explicitly,  $d_{\omega_u}(\h)^* = \h d^* +  *\omega_u$, where $*: \Omega^{1}(M) \rightarrow \Omega^{n-1}(M)$ is the Hodge star operator and for $w \in \Omega^1(M),$ the {\em{codifferential}} is given by 
$$ d^{*}w = - * d *w = \frac{1}{\sqrt{|g|}} \sum_{i,j} \partial_{x_j} [ \sqrt{|g|} g^{ij} w_i].$$
Then, for any given electric potential $V \in C^{\infty}(M)$, we form a family of semiclassical magnetic Schr\"odinger operators 
 \begin{equation} \label{magnetic}
  P_{u}(\h):=  - d_{\omega_u}^*(\h) d_{\omega_u}(\h)  + V(x). \end{equation}
For $u=0$, we clearly have that $P_0(\h) = -\h^2 \Delta_g + V(x),$ where $-\Delta_g: C^{\infty}(M) \rightarrow C^{\infty}(M)$ is the standard Laplacian. 
The operator $P_u(\h)$ has the principal symbol
\begin{equation} \label{magsymbol}
p_{u}(x,\xi) = \sum_{i,j =1}^n (\xi_i + \omega_i(x,u)) g^{ij}(x) (\xi_j + \omega_j(x,u)) + V(x), \end{equation}
where the $\omega_j$ are the local components of the magnetic potentials in (\ref{magform}).
The bicharacteristic flow associated with the Hamiltonian vector field $H_{p_{u}}$ is denoted by
\begin{equation} \label{bichar}
 G^{t}_{u}: T^*M \rightarrow T^*M, \,\, t \in \R. \end{equation}
 
   Theorem \ref{supbound}, the main result of this article, gives an explicit ansatz for mollifying the pointwise behavior of eigenfunctions, on average, by using propagators $e^{it_0 P_u(\h)/\h} $ generated by the magnetic Schr\"odinger operators
  (\ref{magnetic}).
  
 
  Let $(M^n,g)$ be a compact manifold and $P_0(\h) = -\hbar^2 \Delta_g + V(x)$ be a semiclassical Schr\"odinger operator with $\hbar \in (0,\h_0]$ with $E \in \R$ a regular value of $p_0(x,\xi) = |\xi|_g^2 + V(x);$ here, $E(\h) \in  [E-o(1),E+o(1)]$  and $(\phi_{\h})_{\h \in [0, \h_0)}$, where $\phi_\h \in C^{\infty}(M)$, is an $L^2$-normalized family of semiclassical eigenfunctions with $P_0(\h) \phi_\h = E(\h) \phi_{\h}.$ We consider magnetic deformations of $P_0(\h)$ of the form (\ref{magnetic}) and assume that the number of deformation parameters $k \geq n = \dim M.$ In terms of local coordinates $x=(x_1,...x_n)$,  we make the following 
 \begin{defn} \label{admissible}
 We say that the smooth family of magnetic potentials $ (\omega_u)_{u \in \bcal^k(\epsilon)}$ with $\omega_u(x) = \sum_{j=1}^{n} \omega_j(x,u) dx_j \in \Omega^1(M; C^{\infty}(\bcal^k(\epsilon) )$  (interchangably, the corresponding operator $P_u(\h)$)  is \underline{\em admissible}  provided
 \begin{itemize}
  \item $\omega_0(x) = 0\,$ for all $x \in M,$
  \item  The family of maps $ f_x: \bcal^k(\epsilon) \rightarrow \R^n$  with $k \geq n,$ given by the components 
  $$ f_{x}(u) = ( \omega_1(x,u),...,\omega_n(x,u) ),$$
are submersions for all $x \in M.$ 
  \end{itemize} 
  \end{defn}
  The  submersion requirement in Definition (\ref{admissible}) is clearly a coordinate-independent and generic condition \cite{Mo}. In any local coordinate chart and for $\epsilon >0$ small, this amounts to verifying that 
 the first variation $$\delta \omega(x) := \partial_{u_j} \omega_{i}(x,u) |_{u=0},$$ for $i=1,...,n$ and $j=1,...,k$, is a $n \times k$ matrix of rank  $n.$
  For $|t_0|$ small, we define the corresponding deformations of the eigenfunctions  by
 \begin{equation} \label{deform1}
 \phi_\h^{(u)}:= e^{-it_0 P_u(\h)/\h} \phi_\h. \end{equation}
 Consider the initial-value problem 

\begin{equation}\begin{cases} \label{schroedinger}
 \h D_{t} \Phi_\h^{(u)}(t) + P_u(\h) \Phi_\h^{(u)}(t) = 0, \\
 \Phi_\h^{(u)}(0) = \phi_\h.
\end{cases}
\end{equation}  

\noindent The deformed eigenfunctions (\ref{deform1}) are non-stationary solutions of  the time-dependent Schr\"odinger equation (\ref{schroedinger}) evaluated at time $t=t_0;$ that is,
 $$ \Phi_\h^{(u)}(t_0) = \phi_\h^{(u)}.$$
It is also useful to note that when viewed as stationary functions, the $\phi_\h^{(u)}$ are also $L^2$-normalized eigenfunctions of the $\h$-elliptic operators
\begin{equation} \label{helliptic}
Q_u(\h): = e^{-it_0P_u(\h)/\h} P_0(\h) e^{it_0 P_u(\h)/\h} \end{equation}
with eigenvalue $E(\h).$  The $\phi_\h^{(u)}$ are clearly deformations, up to constant multiples of modulus 1, of the eigenfunction $\phi_\h$ with $\phi_\h^{(0)} = e^{-i t_0 E(\h)/\h}  \phi_\h$ and $Q_0(\h) = P_{0}(\h)$; note that $|\phi_\h^{(0)}| = |\phi_\h|$.  Our precisely stated main result is

\begin{theo} \label{supbound} Let  $(M^n,g)$ be a compact Riemannian manifold and $P_{u}(\h):C^{\infty}(M) \rightarrow C^{\infty}(M), \text{ with } u \in \bcal^k(\epsilon) \subset \R^k$ for $k \geq n$, be a family of admissible magnetic Schr\"odinger operators of the form (\ref{magnetic}). Then, for $\epsilon > 0 \text { and } |t_0| >0$ sufficiently small,
$$ \int_{\bcal^k(\epsilon)}  | \phi_{\h}^{(u)}(x) |^2 \, du \asymp 1$$ as $\h \rightarrow 0^+,$ uniformly for $x \in M.$
\end{theo}

\begin{rem}  Throughout, $f(\h) \asymp g(\h)$ will mean that there are constants $C_j >0$ for $j=1,2,$ such that $C_1 f(\h) \leq g(\h) \leq C_2 f(\h)$ for $\h \in (0,\h_0].$ The notation $f(\h) \gtrapprox g(\h)$ means that there is a constant $C >0$ such that $f(\h) \geq C g(\h)$ for $\h \in (0,\h_0].$ For any Lebesgue measurable ${\mathcal A} \subset \R^k$ we denote its measure by $|{\mathcal A}|$ and similarily, when $A \in GL(n;\R)$, we simply write $|A|$ for $|\det A|.$  Finally, $C>0$ denotes a constant which can vary from line to line. \end{rem}

 One consequence of Theorem \ref{supbound} is the following eigenfunction restriction bound:
 
  \begin{theo} \label{restriction}
 Let $H \subset M$ be a smooth  submanifold,  $(\phi_\h)_{\h \in (0,\h_0]}$ be  any family of $L^2$-normalized eigenfunctions of $P_{0}(\h)$  with $P_0(\h) \phi_{\h} = E(\h) \phi_{\h},$ and $E(\h) = E + o(1)$ as above.  Then, given any sequence $\Omega(\h) = o(1)$ as $h \rightarrow 0^+,$ there is a measurable subset $\mathcal A_\h \subset \bcal^k(\epsilon)$ with $ \lim_{\h \rightarrow 0^+} \frac{ |\mathcal A_\h| }{ |\bcal^k(\epsilon)|} = 1$ such that for $u \in \mathcal A_\h,$
$$  \int_{H}  | \phi_{\h}^{(u)} |^2 \, d\sigma_H \lessapprox \frac{1}{\Omega(\h)}$$ as $\h \rightarrow 0^+,$ with the implied constant depending on $H$.  In general, the set $\mathcal A_\h$ will depend on $H,$ the initial eigenfunctions $\phi_{\h}$, and $\Omega(\h)$. 
\end{theo} 
 
 Indeed, restricting the upper and lower bounds in Theorem \ref{supbound} to $x \in H,$ integrating over $H$,  and applying Fubini's Theorem to interchange the iterated integrals gives
 \begin{equation} \label{iterated}
\int_{\bcal^k(\epsilon)} \left( \int_{H} | \phi_\h^{(u)}|^2  \, d\sigma_H  \right)\, du \asymp 1. \end{equation}
 Theorem \ref{restriction}  then  follows from (\ref{iterated}) and the Chebyshev  inequality. Since $\Omega(\h) = o(1)$ is arbitrary, Theorem \ref{restriction} shows that for a generic deformed eigenfunction $\phi_\h^{(u)}$, its restriction bounds along any submanifold $H \subset M$ are much smaller than the deterministic universal upper bounds in \cite{BGT, Hu} and are essentially consistent with the ergodic case \cite{TZ1,TZ2}.
 
  Roughly speaking, Theorem \ref{supbound} and Theorem \ref{restriction} say that by  introducing a generic magnetic potential $\omega_u$ and deforming a Schr\"odinger eigenfunction $\phi_\h$ via the unitary magnetic propagator $e^{-it_0 P_{u}(\h)/\h},$  pointwise blowup  is destroyed on average as one varies over the intensity and orientation of the magnetic field perturbation.  This is related to the notion of ``fidelity" in nuclear magnetic resonance and has been revived more recently in quantum chaos \cite{C,P}.  The quantities studied are the states $e^{itP_u(\h)}e^{-itP_0(\h)}\psi_0$, where $\psi_0$ is some initial function or a family of functions depending on $\h$.  In the case where $\psi_{0,\h}$ are eigenfunctions with $P_0(\h) \psi_{0,\h} = E(\h) \psi_{0,\h},$ one has that  $e^{itP_u(\h)}e^{-itP_0(\h)}\psi_{0,\h}=e^{-it E(\h) /\h} e^{itP_u(\h)}\psi_{0,\h}$, which is a unitary multiple of the family we consider in Theorem \ref{supbound}.  The magnetic deformation is important here since it satisfies the non-degeneracy condition described in Definition \ref{admissible} which is crucial in Proposition \ref{keyprop}; at present, we do not know whether or not  our methods extend to the case of electric deformations of $P_0(\h)$ of the form $P_u(\h) = -\h^2 \Delta_g + V_u$, where $V_u \in C^{\infty}(M)$ for $u \in \mathcal{B}^k(\epsilon).$

  The paper is organized as follows: In Section \ref{motivation} we motivate our general results by discussing the case of a magnetic propagator $e^{it_0 P_u(\h)/\h}$ on $\R^n$  with $P_u(\h) = \langle \h D_x + u, \h D_x + u \rangle.$  Section \ref{FIO} gives background on the semiclassical microlocal analysis that is needed for the proof of our main result. Section 4 contains the proof of Theorem \ref{supbound} and Section \ref{ergodic} gives applications of Theorem \ref{supbound} to quantum ergodic sequences of eigenfunctions; in the latter section, we get asymptotic results (see Theorem \ref{ergodic case}). Finally, in Section \ref{examples}, two examples with extremal eigenfunction supremum bounds are given to illustrate the main result; these are the ground states of the one-dimensional  harmonic oscillator and  the zonal spherical harmonics in two dimensions.
  
  We thank Yaiza Canzani and Dmitry Jakobson for  detailed comments regarding earlier versions of the manuscript and for several stimulating discussions. We  are also grateful to St\'ephane Nonnenmacher for many useful comments/suggestions and for calling our attention to the physics literature on the related question of quantum fidelity. Finally, we thank the referee for detailed comments on improving the manuscript.

\begin{rem}  We note that the results here {\em do  not} give improvements for averaged eigenfunction $L^{\infty}$-bounds but do for restriction bounds on submanifolds.   Theorem \ref{supbound} gives $\sup_{x \in M} \int_{\bcal^k(\epsilon)} | \phi^{(u)}_\h (x)|^2 du={\mathcal O}(1),$ however this does not imply a similar bound for  \break $ \int_{\bcal^k(\epsilon)} \sup_{x \in M} | \phi^{(u)}_\h (x)|^2 du$, i.e. the reverse Fatou lemma does not hold in general.   Indeed, for both examples discussed in Section \ref{examples}, $\| \phi_{\h}^{(u)} \|^2_{L^{\infty}(M)}$ is still maximal even after averaging over the magnetic deformations. \end{rem}





\section{Motivation: Propagators given by quadratic forms in $\R^n.$}\label{motivation}

One of our main motivations here comes from the well-known, explicit Fourier multiplier formulas for non-degenerate quadratic propagators in  $\R^n$ (see \cite[Theorem 4.8]{Zw}).  Let $Q \in GL(n;\R)$ be a real-valued nonsingular $n \times n$ matrix and suppose $\{f_{\hbar}\}_{\hbar \in (0,h_0]}$ is a semiclassical family with $f_{\hbar} \in {\mathcal S}(\R^n)$ and $\| f_{\hbar} \|_{L^2} =1$ for all $\hbar \in (0,\hbar_0].$ Let $\langle Q D, D \rangle$ be the natural inner product where $D$ is the column vector $(D_{x_1},...,D_{x_n})$ and $D_{x_k} = \frac{1}{i} \partial_{x_k}.$  One has the explicit integral formula
\begin{equation} \label{motivation1}
e^{ \frac{i \hbar}{2} \langle Q D, D \rangle} f_{\hbar}(x) =  |\det Q|^{-\frac{1}{2}}  (2\pi \hbar)^{-\frac{n}{2}}   e^{i \frac{\pi}{4} sgn Q} \int_{\R^n} e^{- \frac{i }{2 \hbar} \langle Q^{-1} y, y \rangle } \, f_{\hbar}(x+y) \, dy. \end{equation}
We now consider slightly more general quadratic  polynomial expressions in the $D$'s; specifically, the case where $\langle Q D, D \rangle$ is replaced by the magnetic Schr\"{o}dinger operator $\langle \hbar D +u, \hbar D +u \rangle$ with constant magnetic potential $u \in \R^n.$  It is readily checked  that 
\begin{equation} \label{mag motivation}
f_{\hbar}^{(u)}(x):= e^{ \frac{i}{2 \hbar} \langle \hbar D + u, \hbar D + u \rangle} f_{\hbar}(x) =  (2\pi \hbar)^{-\frac{n}{2}} e^{\frac{i n \pi}{4}} e^{ \frac{i |u|^2}{2\h}}   \int_{\R^n} e^{-\frac{i}{2\hbar} |y-u|^2 } \, f_{\hbar}(x+y) \, dy. 
\end{equation}
Let $\chi \in C^{\infty}_{0}(\R^n)$ with $\chi(u) = 1$ when $|u| <1$ and $\chi(u) = 0 $ for $|u| >2.$  It follows from (\ref{mag motivation}) that for any $ x\in \R^n,$
\begin{equation}\label{mag motivation2} \begin{array}{ll}
\int_{\R^n} | f_{\hbar}^{(u)}(x)|^2 \, \chi(u) \, du  = (2\pi \hbar)^{-n} \int_{\R^{3n}} e^{\frac{i}{2\hbar} (  |y-u|^2 - |y'-u|^2 ) } \, f_{\hbar}(x+y) \, \overline{f_{\hbar}(x+y')} \, \chi(u) \, dy dy' du \\ \\
=(2\pi \hbar)^{-n} \int_{\R^{3n}} e^{\frac{i}{2\hbar} \langle y-y', y+y' - 2u \rangle } \, f_{\hbar}(x+y) \, \overline{f_{\hbar}(x+y')} \,\chi(u) dy dy' du.
\end{array}
\end{equation}
Finally, writing $g_{\hbar,x}(y):= f_{\hbar}(x+y)$ and making the change of variables
\begin{equation} \label{cov}
u \mapsto \xi(u;y,y') = \frac{y+y'}{2} - u,
\end{equation}
it follows that the last line in (\ref{mag motivation2}) can be rewritten in the form
\begin{equation} \label{mag motivation3}
(2\pi \hbar)^{-n} \int_{\R^{3n}} e^{\frac{ i \langle y-y',\xi \rangle}{\hbar}} \chi \left(  \frac{y+y'}{2} - \xi  \right) \, g_{\hbar,x}(y) \overline{g_{\hbar,x}(y')} \, dy dy' d\xi  = \langle Op^w_{\hbar}(a) g_{\hbar, x}, g_{\hbar, x} \rangle_{L^2(\R^n)},\end{equation}
with $a(y,\xi) = \chi(y - \xi) \in S^{0,0}(\R^{2n})$; see (\ref{symbol}) for more on symbol classes. By the Calderon-Vaillancourt Theorem (\ref{bdd}) and the fact that $\| g_{\hbar,x} \|_{L^2} = \| f_{\hbar} \|_{L^2} =1$ for any $x \in \R^n,$ it follows that for $\h \in (0,\h_0],$  

\begin{equation} \label{motivationupshot}
\sup_{x \in \R^n} \int_{\R^n} | f_{\hbar}^{(u)}(x)|^2 \, \chi(u) \, du  \leq C_{n},\end{equation} with $C_n>0$  a purely dimensional constant.  To carry out the Kuranishi change of variables in (\ref{cov}), we have used that
\begin{equation} \label{guts}
\left| d_{u} \xi \right| = 2^{-n} \left| d_{u} d_{\xi} p_u  \right| \neq 0, 
\end{equation}
where, $p_u(x,\xi) = |\xi + u|^2$ is the Hamiltonian function. The non-degeneracy of the mixed Hessian on the RHS of (\ref{guts}) is central to the bound in (\ref{motivationupshot}) and motivates the notion of admissibility in Definition \ref{admissible}.  Our main result in Theorem \ref{supbound} extends the analysis above to a wide class of magnetic deformations on arbitrary compact manifolds. The reason we choose magnetic deformations is largely due to the implied non-degeneracy of the mixed Hessian in (\ref{guts}).



\section{Semiclassical pseudodifferential and Fourier integral operators} \label{FIO} 


We briefly review the relevant calculus of semiclassical pseudodifferential and Fourier integral operators that will be used in the proof of Theorem \ref{supbound}. The reader is referred to \cite{DS, Zw}  for further details.
\subsection{Semiclassical pseudodifferential operators ($\h$-PsiDOs)} The basic semiclassical  symbol spaces are
\begin{eqnarray} \label{symbol}
\nonumber S^{m,k}_{cl}(T^*M): &=& \{ a \in C^{\infty}(T^*M \times (0,\h_0]); \\
&& a \sim_{\h \rightarrow 0} \h^{-m} \sum_{j=0}^{\infty} a_j(x,\xi) \h^{j} \text{ with } | \partial_{x}^{\alpha} \partial _{\xi}^{\beta} a_j(x,\xi)| \leq C_{\alpha,\beta} \langle \xi \rangle^{k-j-|\beta |} \}.  \end{eqnarray}
Here, we use the standard notation for  $\langle \xi \rangle : = \sqrt{ 1 + |\xi|^2}.$ The corresponding space of {\em{$\h$-pseudodifferential operators}} is 
\begin{equation} \label{psdo}
\Psi^{m,k}_{cl}:= \{ A_{\h}: C^{\infty}(M) \rightarrow C^{\infty}(M); A_{\h}= Op_{\h}(a) \, \text{with}\, a \in S^{m,k}_{cl}(T^*M) \}, \end{equation}

\noindent where the Schwartz kernels are locally of the form
$$Op_\h(a)(x,y) = (2\pi \h)^{-n} \int_{\R^n} e^{i \langle x-y,\xi \rangle/\h} a(x,\xi,\h) \, d\xi, \text{ for } a \in S^{m,k}_{cl}.$$
Given $A_{\h} \in \Psi^{m,k}_{cl},$ the principal symbol $\sigma(A_{\h}) := \h^{-m} a_0$ using the notation of (\ref{symbol}).
It is sometimes convenient to use the $\h$-Weyl quantization with corresponding Schwartz kernel
\begin{equation} \label{weyl}
Op_\h^w(a)(x,y) = (2\pi \h)^{-n} \int_{\R^n} e^{i \langle x-y,\xi \rangle/\h} a \left( \frac{x+y}{2},\xi,\h \right) \, d\xi, \text{ for } a \in S^{m,k}_{cl}. \end{equation}

One useful feature of the latter quantization scheme is that for $a(x,\xi,\h)$ real-valued, the corresponding Weyl quantization is formally self-adjoint with
$Op_\h^w(a)^* = Op_\h^w(a).$ The operators $A_\h \in \bigcup_{j,k \in \mathbb Z} \Psi^{j,k}_{cl}$ form an algebra; in particular,   given $A_{\h} \in \Psi^{m,k}_{cl} \text{ and } B_{\h} \in \Psi^{m',k'}_{cl},$ the composition
$ A_\h \circ B_\h \in \Psi^{m+m',k+k'}_{cl}$ has the principal symbol $ \sigma( A_\h \circ B_\h)(x;\xi) = \sigma(A_\h)(x,\xi) \cdot \sigma(B_\h)(x,\xi) = \h^{-m-m'} a_0(x,\xi) b_0(x,\xi)$ in terms of the local representation in (\ref{psdo}).

\subsubsection{Operator $L^2 \rightarrow L^2$ bounds} The Calderon-Vaillancourt Theorem \cite{DS, Zw} states that 
\begin{equation} \label{bdd}
A_\h \in \Psi^{0,0}_{cl}  \Longrightarrow \| A_{\h} \|_{L^2 \rightarrow L^2} \leq C_n \sup_{|\alpha|  + |\beta| \leq 2n +1} | \partial_x^{\alpha} \partial_{\xi}^{\beta} a(x,\xi;\h) |. \end{equation}
In addition, one has the weak Garding inequality:  For $A_\h \in \Psi^{0,0}_{cl}$ and $\h \in (0,h_0]$, with $\h_0>0$ sufficiently small,
\begin{equation} \label{garding}
\sigma(A_\h)(x,\xi) \geq \frac{1}{C} >0 \Longrightarrow A_{\h} \geq \frac{1}{2C} I \end{equation}
where $I$ is the identity operator. The results (\ref{bdd}) and (\ref{garding}) are used in the proof of Theorem \ref{supbound}.

The  family of eigenfunctions $(\phi_\h)_{\h \in (0,\h_0]}$, and the corresponding deformations $(\phi_\h^{(u)})_{\h \in (0,\h_0]}$, have compact $\h$-wave fronts supported in an arbitrarily small neighborhood of the characteristic variety  $ \{ (x,\xi) \in T^*M; |\xi|_g^2 + V(x) = E \}$ for $\h$ sufficiently small; the following section on semiclassical wavefronts provides more details.  Hence, we are only interested here in $A_\h \in \Psi^{m,-\infty}_{cl}$ since all symbols will have compact support in $\xi$ and therefore only the asymptotic behavior in $\h$ is relevant.

\subsubsection{Semiclassical wavefront sets $WF_{\h}$}
Let $( \phi_{\h})_{\h \in (0,\h_0]}$ be a family of tempered $L^2$ functions on $M$ in the sense that $\| \phi_{\h} \|_{L^2(M)} = {\mathcal O}(\h^{-N})$ for some $N>0$ as $\h \rightarrow 0^+.$  In the semiclassical case, one is interested in determining decay properties of the family $(\phi_{\h})_{\h \in (0,\h_0]}$, not just regularity properties as functions on $M$. This naturally leads one to  define the notation of a $semiclassical$ $wave$ $front$ $set$,  $WF_{\h} (\phi_{\h}),$ associated with the family of functions $(\phi_{\h})_{\h \in (0,\h_0]}$; the reader is referred to \cite[Chapter 8, Section 4]{Zw} for further details.  As in the homogeneous case, it is more natural to define the complement of the semiclassical wave front of the family $(\phi_{\h})_{\h \in (0,\h_0])}$ with
\begin{eqnarray} \label{wf}
\nonumber WF_{\h} (\phi_{\h})^{c} &=& \{ (x,\xi) \in T^*M; \exists a \in S^{0,0}_{cl} \,\, \text{with} \,\,  a(x,\xi) \neq 0 \,\, \\  && \text{and}  \,\, \| Op_{\h}(a) \phi_{\h} \|_{L^2} = {\mathcal O}(\h^{\infty}) \|\phi_{\h}\|_{L^2} \}. 
\end{eqnarray}

\subsubsection{Eigenfunction localization}
We will use the fact that eigenfunctions $\phi_\h$ and corresponding deformations $\phi_\h^{(u)}$ have compact $\h$-wave fronts to simplify expressions $e^{-it_0 P_{u}(\h)/\h} \phi_{\h} $  by $\hbar$-microlocaly cutting-off the propagators $e^{-it_{0} P_u(\h)/\h}$ near the energy level set $p_0^{-1}(E) = \{ (x,\xi) \in T^*M; |\xi|_g^2 + V(x) = E \}.$  Such procedures are standard (see \cite[Chapter 8]{Zw}) but, for the convenience of the reader, we briefly review the specific case at hand. By assumption $(P_0(\h) - E ) \phi_{\h} = o(1)$ and $P_0(\h) - E = -\h^2 \Delta_g + V - E$ is $\h$-elliptic off $p_0^{-1}(E).$ Let $\chi \in C^{\infty}_0(\R)$ be a standard cutoff function equal to $1$ near the origin and consider the associated cutoff with  
$$\chi^{(0)}_E(x,\xi): = \chi ( p_0 (x,\xi) - E ).$$   Set $\chi^{(0)}_E (\h) := Op_{\h}(\chi^{(0)}_E) \in \Psi_{cl}^{0,-\infty}(M)$ to be the corresponding $\h$-pseudodifferential cutoffs.  It now follows by a standard semiclassical parametrix construction, as in \cite[Theorem 6.4]{Zw}, that
$ \| \phi_{\h} - Op_{\h}(\chi^{(0)}_{E}) \phi_{\h} \|_{L^2} = {\mathcal O}(\h^{\infty})$
and so, $WF_{\h}( \phi_{\h}) \subset p_0^{-1}(E).$
Replacing $\phi_{\h}$ with $P_{0}^{k}(\hbar) \phi_{\h}$ above, it follows by a Sobolev lemma argument that for all $k \in \N,$
\begin{equation} \label{cut1}
\| \phi_{\h} - Op_{\h}(\chi^{(0)}_{E}) \phi_{\h} \|_{C^k} = {\mathcal O}_{k}(\h^{\infty}). \end{equation}

For $\phi_{\h}^{(u)},$ one has the analogous equation $ ( Q(\h) - E) \phi^{(u)}_{\h} = 0$ where, by the semiclassical Egorov theorem (\cite{Zw} Theorem 11.1), $Q(\h) = e^{-it_0 P_{u}(\h)/\h} P_0(\h) e^{i t_{0} P_u(\h)/\h} \in Op_{\h} (S^{0,2}_{cl}).$ Consequently, by a similar parametrix argument as above it follows that, with the cutoff 
$$\chi^{(u)}_{E}(x,\xi) = \chi ( p_0 ( G_u^{t_0}(x,\xi)) - E),$$
\begin{equation} \label{cut2}
\| \phi_{\h}^{(u)} - Op_{\h}(\chi^{(u)}_{E}) \phi_{\h}^{(u)} \|_{C^k} = {\mathcal O}_{k} (\h^{\infty}). \end{equation}
Thus, from (\ref{cut1}) and (\ref{cut2}),
\begin{equation} \label{cutoff upshot}
 \phi_{\h}^{(u)} = Op_{\h}(\chi^{(u)}_{E}) e^{-it_0 P_u(\h)/\h} Op_{\h}(\chi^{(0)}_{E}) \phi_{\h} + {\mathcal O}_{C^k}(\hbar^{\infty}). \end{equation}  We note the cutoff 
 $\chi^{(u)}_{E}(x,\xi)$ is defined using the magnetic flow $G_u^{t_0}:T^*M \rightarrow T^*M$, but $\chi^{(u)}_{E}(x,\xi) = \chi (p_0(x,\xi) - E + {\mathcal O}(|u|))$ and so, for $u \in \bcal^k(\epsilon),$  supp  $ (\chi_{E}^{(u)}) \subset \{ (x,\xi) \in T^*M; d((x,\xi), p_0^{-1}(E) ) \leq C \epsilon \}$ with appropriate $C>0.$ Here, $d:T^*M \times T^*M \rightarrow \R^+$ is the distance function relative to any Riemannian metric on $T^*M$. The main point is that, for $\epsilon >0$ small,  supp $(\chi^{(u)}_{E})$ remains localized near the hypersurface $p_0^{-1}(E)$ for all $u\in \mathcal{B}^k(\epsilon)$.

\subsection{Semiclassical Fourier integral operators ($\h$-FIOs)}
 Due to the compactness of $WF_\h( \phi_\h^{(u)}),$  it suffices here to consider operators  $F_\h$ with Schwartz kernels locally of the form
\begin{equation} \label{fiodefn}
F_\h(x,y) = (2\pi \h)^{-n} \int_{\R^n} e^{i \psi(x,y; \xi)/\h} a(x,y; \xi,\h) \, d\xi, \end{equation}
for $a(x,y,\xi, \h) \in C^{\infty}_0(U \times V \times \R^n \times (0, \h_0])$ with $a(x,y,\xi,\h) \sim_{\h \rightarrow 0} \sum_{j=0}^{\infty} a_j(x,y; \xi) \h^{j-m}$, $a_j(x,y;\xi) \in C^{\infty}_0(U \times V \times \R^n)$, for some $m \in \R$, with $U$ and $V$ being open subsets of $\R^n$.  We assume that $\psi$ is a non-degenerate phase in the sense of H\"ormander \cite{DS}, that is, $\{d_{x,y,\xi}(d_{\xi_1}\psi),...,d_{x,y,\xi}(d_{\xi_n}\psi)\}$ is linearly independent  on the critical set $C_{\psi}$, where
$$ C_{\psi}: = \{ (x,y,\xi) \in U \times V \times \R^n; d_{\xi} \psi(x,y,\xi) = 0 \}.$$  

We use this local formulation to define our operator between compact, $n$-dimensional manifolds $M$ and $N$.  In view of (\ref{fiodefn}), $F_\h$ can be associated to an immersed Lagrangian manifold $\Gamma \subset T^*M \times T^*N$, where
$$ \Gamma = \{ (x,d_x \psi; y, -d_y \psi); d_{\xi} \psi (x,y,\xi) = 0 \},$$ by pushing forward $C_\psi$ through the map $i_{\psi}: C_{\psi} \to \Gamma \subset T^*(M \times N)$ given by $(x, y, \xi) \to (x, d_x \psi, y, -
d_y \psi)$; the manifold $\Gamma$ then contains $WF_{\h}(F_\h).$  For $\h \in (0,\h_0]$ small, the operators $F_\h: C^{\infty}(M) \rightarrow  C^{\infty}(N)$, with Schwartz kernel locally of the form (\ref{fiodefn}), are called {\em $\h$-Fourier integral operators} and we write $F_\h \in I^{m,-\infty}_{cl} (M \times N; \Gamma).$

The symbol of a semiclassical
Fourier integral operator $F_\h \in I^{m,-\infty}_{cl}(M \times N, \Gamma)$, where $h \in (0,h_0]$, is a
smooth section of $\Omega_{1/2} \otimes L$, that is, the half-density bundle of $\Gamma$ twisted by the Maslov line bundle.  In terms of the semiclassical Fourier integral representation in (\ref{fiodefn}), it is
the square root $\sqrt{d_{C_{\psi}}}$ of the delta-function on
$C_{\psi}$ defined by $\delta(d_{\xi} \psi)$, transported to
its image in $T^* (M \times N)$ under $\iota_{\psi}$. Concretely, if $(\lambda_1, \dots,
\lambda_n)$ are any local coordinates on $C_{\psi}$, extended as
smooth functions in a neighborhood of $C_{\psi}$, then
$$ d_{C_{\psi}}: = \frac{|d \lambda|}{|\partial(\lambda,
d_{\xi}\psi)/\partial(x,y,\xi)|}, $$ where $d \lambda$ is the
Lebesgue density and $|\partial(\lambda,
 d_{\xi}\psi)/\partial(x,y,\xi)|$ denotes the determinant of the Jacobian matrix. In the semiclassical Fourier integral representation (\ref{fiodefn}), the symbol is
transported from the critical set  $C_{\psi}$
 by the Lagrangian immersion $i_{\psi}$ to the set $\Gamma$. At a point $(x_0, \xi_0, y_0, \eta_0) \in \Gamma$, the principal symbol of $F_\h$
is defined to be
$$\sigma(F_\h) (x_0, \xi_0, y_0, \eta_0) := \h^{-m} i^*_{\psi} a_0
\sqrt{d_{C_{\psi}}}. $$  This formulation gives a coordinate-invariant definition of the principal symbol.

For a more detailed treatment of the compactly-supported $\h$-Fourier integral operators in (\ref{fiodefn}), see \cite[Chapter 8]{GuSt} and \cite[Chapter 10]{Zw}.
\subsubsection{Magnetic propagators} 
Consider the magnetic propagators  $W_u(\h):C^{\infty}(M) \rightarrow C^{\infty}(M)$, where 
$$W_{u}(\h):= \exp ( - i t_0 P_u(\h)/\h)$$  with Schwartz kernel $ W_u(\h)(x,y)$ and $t_0 > 0$ small. The associated evolution operator
$W(\hbar): C^{\infty}(M) \rightarrow C^{\infty}(M \times \bcal^k(\epsilon))$ has Schwartz kernel
$$W(\hbar)((x,u),y); \,\,  (x,u) \in M \times \bcal^k(\epsilon), \, y \in M.$$ One clearly has
$$ W_u(\hbar)(\cdot,\cdot) = W(\hbar)((\cdot,u),\cdot ).$$ 
  A key point in our argument is to interchange the roles of $x \in M$ and $u \in \bcal^k(\epsilon),$ viewing $u \in \bcal^k(\epsilon)$  as {\em range} variables and $x \in M$ as parameters. In terms of the evolution operator $W(\hbar)$, we define
  $$\tilde{W}_{x}(\hbar)(\cdot,\cdot) :=  W(\hbar)( (x,\cdot), \cdot).$$  Therefore, one has the identity
  \begin{equation} \label{newkernel}
\tilde{W}_{x}(\hbar)(u,y) = W_{u}(\hbar)(x,y), \text{ for } (x,y,u) \in M \times M \times \bcal^k(\epsilon), \end{equation}
with corresponding operators 
$$ \tilde{W}_{x}(\h): C^{\infty}(M) \longrightarrow C^{\infty}( \bcal^k(\epsilon) ) $$
depending on the {\em spatial parameters} $x \in M.$ 

We define the family of cut-off propagators $W_{u,E}(\h):C^{\infty}(M) \rightarrow C^{\infty}(M)$, where
 \begin{equation} \label{cutoff prop}
 W_{u,E}(\h):= Op_{\h}(\chi^{(u)}_{E}) \, e^{-it_0 P_u(\h)/\h} \, Op_{\h}(\chi^{(0)}_{E}), \,\, \h \in (0,\h_0]. \end{equation}
 The  family of cut-off operators  $W_{u,E}(\h); \h \in (0, \h_0]$ are compactly-supported semiclassical Fourier integral operators, with 
 $$W_{u,E}(\h) \in I^{0,-\infty}_{cl}(M \times M; \Lambda_{t_0}),$$  where
\begin{eqnarray} \label{cutoff2}
\nonumber \Lambda_{t_0} &:=& \{ (x,\xi;y,\eta) \in T^*M \times T^*M; (x,\xi) = G_{u}^{t_0}(y,\eta), \, (x,\xi) \in \text{supp}(\chi_{E}^{(u)}), \\&& (y,\eta) \in \text{supp} (\chi_{E}^{(0)}) \}. \end{eqnarray}
 In analogy with (\ref{newkernel}), we define a family of cut-off operators $\tilde{W}_{x,E}(\h): C^{\infty}(M) \rightarrow C^{\infty}(\bcal^k(\epsilon))$ with Schwartz kernels
 \begin{equation} \label{newkernelcutoff}
 \tilde{W}_{x,E}(u,y)(\h):= W_{u,E}(x,y)(\h), \,\, (x,y,u) \in M \times M \times \bcal^k(\epsilon), \,\, \h \in (0,\h_0].\end{equation}  Hence, in view of the eigenfunction localization estimates in (\ref{cut1}) and (\ref{cut2}),
\begin{equation} \label{cutoff upshot 2} \begin{array}{ll}
 \int_{B^k(\epsilon)} | \phi_{\h}^{(u)}(x)|^2 du = \int_{B^k(\epsilon)} |  W_{u,E}(\h) \phi_{\h} (x)|^2 du + {\mathcal O}(\h^{\infty}) \\ \\
 =  \int_{B^k(\epsilon)} | \tilde{W}_{x,E}(\h) \phi_h (u) |^2 \, du + {\mathcal O}(\h^{\infty}).
\end{array} \end{equation}
Let $B(x_0,\epsilon) \subset M$ be a small geodesic ball centered at $x_0 \in M.$
Locally, for $x \in B(x_0,\epsilon) \subset M,$  it suffices to assume that $k=n$ (see (\ref{jac})). In the next section (see Proposition \ref{keyprop}), we prove that for $t_{0},\epsilon>0$ sufficiently small, the family of operators
 $$  \tilde{W}_{x,E}(\h) \in I^{0, -\infty}_{cl}( M \times \bcal^n(\epsilon); \Gamma_{x}); \, \, \h \in (0,\h_0], $$
 Moreover, under the admissibility assumption in Definition \ref{admissible},  $\Gamma_{x} \subset T^*M \times T^* \bcal^n(\epsilon)$ are
 canonical graphs, locally for $x \in B(x_0,\epsilon).$  It is at this point that we use the specific structure of the magnetic perturbations to show that (\ref{admissible}) is satisfied which, in turn, implies the graph condition on $\Gamma_x.$ Then, given the last line in (\ref{cutoff upshot 2}), we use a  $T^*T$-argument (also utilized in the proof of Corollary \ref{upshot}) with $T= \tilde{W}_{x,E}(\h)$, and the $L^2$-bounds (\ref{bdd}) and (\ref{garding}) locally uniform in $x\in B(x_0,\epsilon)$, to finish the proof of Theorem \ref{supbound}.
  
 

\section{Proof of Theorem \ref{supbound}}
\subsection{Operator bounds for semiclassical propagators}

The proof of our main theorem is based on a well-known $L^2$ operator bound for non-degenerate $\h$-Fourier integral operators (see \cite[Chapter 10]{Zw}). 
We introduce some notation before proving an important proposition. Fix $x_0 \in M$ and with $0 < \epsilon < inj(M,g)$ small so that the geodesic ball $B(x_0,\epsilon)$ is centered at $x_0$ with radius $\epsilon >0.$  One can locally write $P_{u}(\h) = Op_{\h} ( \sigma_0(P_u) + \h i \sigma_{-1}(P_u) )$, where the principal symbol $$\sigma_0(P_u) = p_{u}(x,\xi) = \sum_{i,j}( \xi_i + \omega_i(x,u)) g^{ij}(x) ( \xi_j + \omega_j(x,u)) + V(x).$$
 In local coordinates, $$  \sigma_{-1}(P_u)(x,\xi) =  \frac{1}{\sqrt{|g|}} \sum_{i,j} \partial_{x_j} [ \sqrt{|g|} g^{ij}] \xi_i  +  \frac{1}{\sqrt{|g|}} \sum_{i,j} \partial_{x_j} [ \sqrt{|g|} g^{ij} \omega_i(x,u) ]. $$ 
 
For $x \in B(x_0,\epsilon)$ with $\epsilon >0$ sufficiently small, it follows by the admissibility assumption (\ref{admissible}) that for some subset $u' = (u_{i_1},..,u_{i_n}) \in \bcal^{n}(\epsilon),$ 
\begin{equation} \label{jac}
 \left| \frac{ \partial \omega(x,u',u'')}{ \partial u'} \right|  \geq C_{0}, \end{equation}
uniformly for $(x,u',u'') \in B(x_0,\epsilon) \times \bcal^{k}(\epsilon).$ Of course, the choice of $u'$ can vary depending on the geodesic ball $B(x_0,\epsilon).$  The proof of Theorem \ref{supbound} is local in $x$  and so, in view of (\ref{jac}), it suffices to  assume that $k=n,$ which also simplifies the statement below of  Proposition \ref{keyprop}. We now introduce some notation in preparation for the statement of Proposition \ref{keyprop}.
Given  local coordinate charts $U,V \Subset  \R^n,$  
\begin{equation} \label{phase0}
\tilde{\phi}(y,u,\xi;x) := S(t_0,x,\xi;u) - \langle y, \xi \rangle, \end{equation}
where $(y,u,\xi;x) \in V \times \bcal^{n}(\epsilon) \times \R^n \times U$ and  $S(t,x,\xi;u)$  satisfies the initial-value problem  
\begin{equation}
\begin{cases} \label{phase2}
\partial_t S(t,x,\xi;u) + p_u(x, \partial_{x}S(t,x,\xi)) = 0,  \\
 S(0,x,\xi;u) = \langle x,\xi \rangle.
\end{cases}
\end{equation}
The key technical step in the proof of Theorem \ref{supbound} is the following

\begin{prop} \label{keyprop}  Assume that the family $(P_{u}(\h))_{\h \in (0,\h_0]}$ of magnetic Schr\"odinger operators satisfies (\ref{jac}) for  $ (x,u) \in \bcal(x_0,\epsilon) \times \bcal^n(\epsilon).$ Then,  for $\epsilon, |t_0| >0$ sufficiently small,

$$\tilde{W}_{x,E}(\h) \in I^{0,-\infty}_{cl}(M \times \bcal^n(\epsilon); \Gamma_{x}),$$ \smallskip
and the immersed Lagrangian
\begin{eqnarray*} 
\Gamma_{x} &=& \{ (u, d_{u} \tilde{\phi}(y,u,\eta;x) ; y, -d_y \tilde{\phi}(y,u,\eta;x)), \,\,
 d_{\eta} \tilde{\phi}(y,u,\eta;x) = 0 \} \subset  T^* \bcal^n(\epsilon) \times T^*M
\end{eqnarray*} is a canonical graph that is locally parametrized by the phase function  $\tilde{\phi}$ in (\ref{phase0}). 

Moreover, the local formula for the principal symbol of $\tilde{W}_{x,E}(\h)$  is
\begin{eqnarray}
\sigma( \tilde{W}_{x,E}(\h)) (u,\tau,y,\eta) =   \chi_{E}^{(u)}(x,\eta) \, e^{i \int_{0}^{t_0} \sigma_{-1}(P_u)( G^{t}_u(x,\eta) ) dt }  \\ 
\label{halfdensity} \times \frac{ | d_x \pi G^{t_0}_{u}(x,\eta)|^{1/2} }{ | d_{u} d_{\eta} \tilde{\phi}(y,u,\eta;x) |^{1/2} } \, | dy d\eta|^{\frac{1}{2}} \,\, 
\end{eqnarray}
where $\chi \in C^{\infty}_0(\R)$ and $( u,\tau,y,\eta) \in \Gamma_{x}.$ In (\ref{halfdensity})   $\pi:T^*M \rightarrow M$ is the canonical projection on the base manifold and we view $u= u(y,\eta)$ as a function of the local cotangent coordinates $(y,\eta) \in T^*M$ parametrizing $\Gamma_{x}.$

\end{prop}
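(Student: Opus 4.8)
The plan is to realise $\tilde W_{x,E}(\h)$ as a semiclassical Fourier integral operator by starting from the standard WKB parametrix for the magnetic propagator $e^{-it_0 P_u(\h)/\h}$ and then literally transposing the roles of the base point $x$ and the deformation parameter $u$. For $|t_0|$ small the Schwartz kernel of $e^{-it_0 P_u(\h)/\h}$ is, in local coordinates, of the form $(2\pi\h)^{-n}\int_{\R^n}e^{i(S(t_0,x,\eta;u)-\langle y,\eta\rangle)/\h}\,a(t_0,x,y,\eta,u,\h)\,d\eta$, where $S$ solves the Hamilton--Jacobi problem (\ref{phase2}) and $a\sim\sum_{j\geq 0}a_j\h^j$ is produced by transport equations along the bicharacteristics of $p_u$; see \cite[Chapter 10]{Zw}. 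This parametrix is valid uniformly in $u\in\bcal^n(\epsilon)$ and for $x$ in a fixed chart once $|t_0|$ and $\epsilon$ are small, since then the flows $G^t_u$ stay uniformly close to $G^t_0$ and no caustics appear. Composing on the left and right with $Op_\h(\chi^{(u)}_E)$ and $Op_\h(\chi^{(0)}_E)$ leaves the phase (hence the canonical relation) unchanged and multiplies the leading amplitude by pullbacks of the cutoff symbols, so $W_{u,E}(\h)$ has a kernel of the same shape. Holding $x$ fixed and reading $\tilde W_{x,E}(\h)(u,y)=W_{u,E}(\h)(x,y)$ as an operator $C^\infty(M)\to C^\infty(\bcal^n(\epsilon))$, one gets exactly the form (\ref{fiodefn}) with base variables $(y,u)$, fibre variable $\eta$, and phase $\tilde\phi(y,u,\eta;x)=S(t_0,x,\eta;u)-\langle y,\eta\rangle$ as in (\ref{phase0}). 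This $\tilde\phi$ is automatically a non-degenerate phase in H\"ormander's sense, since $\partial_{\eta_j}\tilde\phi=\partial_{\eta_j}S(t_0,x,\eta;u)-y_j$ and the covectors $d_{y,u,\eta}(\partial_{\eta_j}\tilde\phi)$ already contain the linearly independent set $-dy_1,\dots,-dy_n$; hence $\tilde W_{x,E}(\h)\in I^{0,-\infty}_{cl}(M\times\bcal^n(\epsilon);\Gamma_x)$ with $\Gamma_x$ the stated immersed Lagrangian.

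The heart of the argument is the canonical graph property, and this is where the admissibility hypothesis does its work. Parametrise $\Gamma_x$ by $(u,\eta)$: on the critical set $d_\eta\tilde\phi=0$ one has $y=\partial_\eta S(t_0,x,\eta;u)$, and the point of $\Gamma_x$ is $(u,\partial_u S;\,\partial_\eta S,\eta)$. A block-triangular Jacobian computation shows that the projection $\Gamma_x\to T^*M$, $(u,\eta)\mapsto(\partial_\eta S,\eta)$, is a local diffeomorphism precisely when $\det\partial_u\partial_\eta S(t_0,x,\eta;u)\neq 0$, and by the Lagrangian property this is also exactly the condition for $\Gamma_x$ to be a canonical graph. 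To verify it I would expand the Hamilton--Jacobi solution for small time, $S(t,x,\eta;u)=\langle x,\eta\rangle-t\,p_u(x,\eta)+{\mathcal O}(t^2)$, so that $\partial_u\partial_\eta S(t_0,x,\eta;u)=-t_0\,\partial_u\partial_\eta p_u(x,\eta)+{\mathcal O}(t_0^2)$. Differentiating (\ref{magsymbol}) gives $\partial_{u_k}\partial_{\eta_j}p_u(x,\eta)=2\sum_l g^{jl}(x)\,\partial_{u_k}\omega_l(x,u)$, i.e. $\partial_u\partial_\eta p_u=2\,(g^{ij})(x)\cdot(\partial_u\omega)(x,u)$; since $(g^{ij})$ is invertible and the $n\times n$ matrix $(\partial_u\omega)(x,u)$ has determinant bounded below by (\ref{jac}), this leading term is invertible uniformly in $(x,u)$, whence $\det\partial_u\partial_\eta S\neq 0$ with a uniform lower bound for $|t_0|$ small. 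This mirrors the non-degeneracy of the mixed Hessian in (\ref{guts}); conceptually the model computation already contains the point, so I expect the genuine technical burden to be of the bookkeeping kind: keeping both this estimate and the WKB construction uniform as $x$ varies over $B(x_0,\epsilon)$ and $u$ over $\bcal^n(\epsilon)$ (and recalling, via the eigenfunction localization in Section \ref{FIO}, that only $\eta$ near $p_0^{-1}(E)$ is relevant).

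Finally, for the principal symbol I would transcribe the WKB amplitude into the invariant half-density symbol described in Section \ref{FIO}. The leading amplitude $a_0$ of the propagator parametrix, read off by solving the transport equations along $H_{p_u}$, supplies the van Vleck-type Jacobian factor $|d_x\pi G^{t_0}_u(x,\eta)|^{1/2}$ together with the subprincipal phase $e^{i\int_0^{t_0}\sigma_{-1}(P_u)(G^t_u(x,\eta))\,dt}$; the two pseudodifferential cutoffs contribute the localizing factor $\chi^{(u)}_E(x,\eta)$ (both $\chi^{(u)}_E$ and $\chi^{(0)}_E$ being $\equiv 1$ near $p_0^{-1}(E)$, over which the relevant covectors lie, so the companion factor may be dropped). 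Passing from the amplitude on $C_{\tilde\phi}$ to the half-density $\sqrt{d_{C_{\tilde\phi}}}$ on $\Gamma_x$, using $(y,\eta)$ as coordinates on $\Gamma_x$, produces the Jacobian factor $|d_u d_\eta\tilde\phi(y,u,\eta;x)|^{-1/2}|dy\,d\eta|^{1/2}$ (the mixed Hessian being exactly the one shown nonzero above). Assembling these three contributions gives (\ref{halfdensity}), with $u=u(y,\eta)$ the function of the cotangent coordinates determined by $\Gamma_x$.
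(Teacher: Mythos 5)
Your proposal is correct and follows essentially the same route as the paper: write the small-time WKB parametrix with phase $S(t_0,x,\eta;u)-\langle y,\eta\rangle$, swap the roles of $x$ and $u$, use the expansion $S=\langle x,\eta\rangle-t_0p_u+{\mathcal O}(t_0^2)$ together with (\ref{jac}) to get nondegeneracy of the mixed Hessian $\partial_u\partial_\eta\tilde\phi$ and hence the canonical graph property, and then assemble the symbol from the transport-equation (van Vleck and subprincipal) factors, the cutoff, and the Leray density. The only cosmetic difference is that you factor $\partial_u\partial_\eta p_u=2(g^{ij})(\partial_u\omega)$ and invoke invertibility of the metric matrix directly, where the paper works in geodesic normal coordinates with $g^{ij}=\delta^i_j+{\mathcal O}(|x|^2)$; the two devices are interchangeable.
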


\begin{proof}

 For $|t_0| < \epsilon_0$ small, the kernel
of $W_{u,E}(\h)$ can be locally written in the form (\cite[Chapter 10, Section 2]{Zw})
\begin{equation} \label{fio1}
W_{u,E}(\h)(x,y) = (2\pi \h)^{-n} \int_{\R^{n}} e^{i \phi(x,y,\xi;u) /\h} a(x,\xi;\h,u) \, d\xi + K_u(\h)(x,y) \end{equation}
where the amplitude $a(\cdot,u) \sim \sum_{j=0}^{\infty} a_j(\cdot,u) \h^j$ with $a_j(\cdot,u) \in  C^{\infty}( \mathcal{B}^n(\epsilon), C^{\infty}_0(U \times \R^n))$ with coordinate charts $U \Subset  \R^n$ and has the expansion discussed in the section on $\h$-FIOs.  In (\ref{fio1}), the operators $K_u(\h)$ are {\em residual} in the sense that
$$ | \partial_{x}^{\alpha} \partial_{y}^{\beta} K_u(\h)(x,y)| = {\mathcal O}_{\alpha,\beta,\epsilon}(\h^{\infty})$$ uniformly in $(x,y,u) \in U \times V \times \bcal^n(\epsilon)$, for $\epsilon >0$ small.
The phase function is of the form
\begin{equation} \label{phase1}
\phi(x,y,\xi;u) = S(t_0,x,\xi;u) - \langle y, \xi \rangle \end{equation}

Here,
\begin{equation} \label{magham}
p(x,\xi,u) = \sum_{i,j=1}^n g^{ij}(x) ( \xi_i + \omega_i(x,u)) ( \xi_j + \omega_j(x,u)) + V(x) \end{equation}
is the principal symbol of the operator $P_u(\h)$.  In view of (\ref{phase2}),  by a Taylor expansion of $S(t,x,\xi) $ around $t=0$, it follows that for $t=t_0$ small,
\begin{equation} \label{phase3}
S(t_0,x,\xi;u) = \langle x, \xi \rangle - t_0 p(x,\xi,u) + {\mathcal O}(t^2_0). \end{equation}
The remainder in (\ref{phase3}) is locally uniform in $(x,\xi,u) \in K \times \bcal^n(\epsilon)$ with $K \subset T^*M$ \ compact and $\epsilon >0$ small.  By implicit differentiation of (\ref{phase2}) in $(x,\xi)$, it follows that the same is true for derivatives, implying 
$$\| S(t_0,x,\xi;u) -  \langle x, \xi \rangle + t_0 p(x,\xi,u) \|_{C^{k}(K \times \bcal(\epsilon))} = {\mathcal O}_{k}(t^2_0).$$

The key step in the proof of Proposition \ref{keyprop} involves interchanging the roles of $x$ and $u.$ Consequently, we now view $u \in \bcal^n(\epsilon)$ no longer as parameters, but rather as range variables for the operators $\tilde{W}_{x,E}(\h): C^{\infty}(M) \rightarrow C^{\infty}(\bcal^n(\epsilon))$ in (\ref{newkernel}) which now depend on the local parameters $x \in B(x_0,\epsilon).$ 

To emphasize this point and following (\ref{phase0}), we write
\begin{equation} \label{newphase} \begin{array}{ll}
\tilde{\phi}(u,y,\xi;x) = S(t_0,x,\xi;u) - \langle y, \xi \rangle \\ \\
= \langle x - y, \xi \rangle - t_0 p(x,\xi,u) + {\mathcal O}(t^2_0) \\ \\
= \langle x - y, \xi \rangle - t_0 \sum_{i,j} g^{ij}(x) (\xi_i + \omega_i(x,u)) (\xi_j + \omega_j(x,u)) + {\mathcal O}(t^2_0),
\end{array} \end{equation}
where the last line follows from (\ref{phase3}).
Similarly, we define 
$\tilde{a}(u,\xi;x,\h) := a(t_0,x,\xi;u,\h).$  

From (\ref{fio1}), it follows that $\tilde{W}_{x,E}(\h) \in I^{0,-\infty}_{cl}(M \times \bcal^n(\epsilon);\Gamma_x)$. It remains to  show that $\Gamma_x$ is a canonical graph and to compute the principal symbol of $\tilde{W}_{x,E}(\h).$

 We recall that the admissibility assumption implies that locally, for $x \in B(x_0,\epsilon)$,  we have the invertibility  condition (\ref{jac})
\begin{equation} \label{jacobian}
| d_{u} (\omega_1(x,u),....,\omega_n(x,u)) | \geq C >0 \end{equation}
uniformly for $(x,u) \in B(x_0,\epsilon) \times \bcal^n(\epsilon)$ with $\epsilon >0$ small. 

We compute in geodesic normal coordinates  $x =(x_1,...,x_n)$ on $B(x_0,\epsilon)$ centered at $x_0$ (i.e. $x(x_0) = 0$); here onward, for convenience, we abuse notation somewhat by identifying points with the respective coordinate representations. Since $g^{ij}(x) = \delta_{j}^i + {\mathcal O}(|x|^2),$ a Taylor expansion in the $x$-variables in (\ref{magham}) around $x=0$ gives

\begin{equation} \label{taylorham} \begin{array}{ll}
p(x;\xi,u) = | \xi + \omega(x,u) |^{2} ( 1 + {\mathcal O}(|x|^2)) + V(x);
\end{array}
\end{equation}
in view of (\ref{newphase}),
\begin{equation}\label{newphase2}
\tilde{\phi}(u,y,\xi;x) = \langle x - y, \xi \rangle - t_0  ( \,   | \xi + \omega(x,u) |^{2} ( 1 + {\mathcal O}(|x|^2))  + V(x)  \, )  + {\mathcal O}(t^2_0). \end{equation}
From (\ref{newphase2}), it follows that  the mixed $(u,\xi)$-Hessian matrix of $\tilde{\phi}(u,y,\xi;x)$ has entries
\begin{equation} \label{taylorham2}
\partial_{u_i} \partial_{\xi_j}  \tilde{\phi}(u,y,\xi;x) = 2 t_0  (  \partial_{u_i} \omega_j(x,u) + {\mathcal O}(|x|^2) + {\mathcal O}(t_0) ) \text{ for } \,\, 1\leq i,j \leq n.\end{equation}
Consequently, from (\ref{jacobian}) and (\ref{taylorham2}), the operator $\tilde{W}_{x,E}(\h):C^{\infty}(M) \rightarrow C^{\infty}(\bcal^n(\epsilon))$ has Schwartz kernel


$$  \tilde{W}_{x,E}(\h)(u',y) = (2\pi \h)^{-n} \int_{\R^n}   e^{i \tilde{\phi}(u,y,\xi;x)/\h} \tilde{a}(u,\xi;x,\h)  \, d\xi + R(\h) \\ $$ 
\noindent with $R(\h)$ residual and for   $|t_0|,\epsilon >0$ sufficiently small,
\begin{equation} \label{graphcondition} 
| d_{u} d_{\xi} \tilde{\phi}(u,y,\xi;x)| = 2^n | t_{0} |^n \cdot  |  \, (  \, \partial_{u_i} \omega_j(x,u)  + {\mathcal O}(|x|^2)  + {\mathcal O}(t_0)  \, )_{1\leq i,j \leq n}  \, | 
\geq C |t_0|^n
\end{equation} locally and uniformly in $(u,y,\xi;x).$  From (\ref{graphcondition}) and  the Implicit Function Theorem, we conclude that the canonical relation of $\tilde{W}_{x,E}(\h): C^{\infty}(M) \rightarrow C^{\infty}(\bcal^n(\epsilon))$,  given by
\begin{eqnarray} \label{rel}
\nonumber \Gamma_{x}  &=& \{ (u,  d_{u} \tilde{\phi}; y, -d_y \tilde{\phi}) \in  T^*(\bcal^n(\epsilon)) \times T^*M; \,\, d_{\xi} \tilde{\phi} = 0 \} \\
&=& \{ (u,  d_{u} S; y, \xi) \in  T^*(\bcal^n(\epsilon)) \times T^*M; \,\, y = d_{\xi} S \} \end{eqnarray}  is a canonical graph for all $ x \in B(x_0,\epsilon)$ when $|t_0|, \epsilon >0$ are sufficiently  small.
From (\ref{rel}) it is clear that $(y,\xi)$ can be chosen as local parametrizing variables for $\Gamma_{x}.$

\begin{rem} We note that it is at this point that the specific form of the magnetic Hamiltionian $p(x,\xi,u)$  (\ref{taylorham}) is used to derive the required non-degeneracy of the phase function $\tilde{\phi}$ in (\ref{graphcondition}). \end{rem}

To determine the principal symbol $\sigma (\tilde{W}_{x,E}(\h))$, we compute the leading order term $a_0(u,\xi;x)$ in the amplitude of $ \tilde{W}_{x,E}(\h).$  Writing $\phi_t(x,y,\xi;u) =  S(t,x,\xi;u) - \langle y, \xi \rangle $, it follows that  $a_{0}(u,\xi;x) = a_{t_0}(x,\xi;u)$, where $a_t$ solves the initial-value problem for the first transport equation 
\begin{equation} \label{transport1} \begin{cases}
\partial_{t} a_t - H_{p_u} a_t  - \left[ \frac{1}{2} \sum_{i,j} \frac{\partial^2 p_u}{\partial \xi_i \partial \xi_j }  \cdot \frac{ \partial^2 S}{\partial x_i  \partial x_j } - i \sigma_{-1}(P_u)   \right] a_t = 0
 \\ \\
a_{0} = \chi^{(0)}_{E}  \mbox{ when } t=0 \end{cases}
\end{equation}
on the Lagrangian $\Gamma_{\phi_t} = \text{graph} ( G^{t}_{u}).$ 
  For $|t_0|$ small, a standard method of characteristics argument  gives the solution
\begin{equation}\label{symbol1}
a_{t_0}(x,\xi;u) = e^{i \int_{0}^{t_0} \sigma_{-1}(P_u)( G^{t}_u(x,\xi) ) dt } \cdot  | \det d_x  \pi \big( G^{t_0}_{u}(x,\xi) \big)|^{\frac{1}{2}} \cdot \chi^{(u)}_E(x,\xi), \end{equation}
where $G^{t_0}_{u}: T^*M \rightarrow T^*M$ is the time $t_0$ bicharacteristic flow generated by the magnetic Hamiltonian $p_u$ and $\chi^{(u)}_E(x,\xi) = \chi^{(0)}_{E} ( G^{t_0}_{u}(x,\xi)).$

The Leray density on $\Gamma_{x}$ is 
$$ d_{C_{\tilde{\phi}} } = \frac{|du dy d\xi |}{ | d (d_{\xi} \tilde{\phi} )|} = \frac{|du dy d\xi |}{ | d (y - d_{\xi} S(u,\xi;x,t_0) )|} = |du d\xi|.$$
In terms of coordinates $(u,\tau = d_{u}S(u,\xi;x,t_0))$  that parametrize the canonical graph $\Gamma_{x}$,  we have 
\begin{equation} \label{leray} \begin{array}{ll}
\sigma (\tilde{W}_{x,E}(\h))(u,\tau) = a_{t_0}(u,\xi;x) \, |d_{u} d_{\xi} S |^{-\frac{1}{2}} \, |du d\tau|^{1/2} \end{array} \end{equation}
$$=e^{i \int_{0}^{t_0} \sigma_{-1}(P_u)( G^{t}_u(x,\xi) ) dt }  \,  \chi^{(u)}_{E}(x,\xi) \, \frac{ | d_x \pi G^{t_0}_{u} (x,\xi)|^{1/2} }{  |d_{u} d_{\xi} S |^{1/2} } \, |du d\tau|^{1/2} $$
$$= e^{i \int_{0}^{t_0} \sigma_{-1}(P_u)( G^{t}_u(x,\xi) ) dt } \, \chi^{(u)}_{E}(x,\xi)  \, \frac{ | d_x \pi G^{t_0}_{u} (x,\xi)|^{1/2} }{  |d_{u} d_{\xi} S |^{1/2} } \, |dy d\xi |^{1/2}$$
 for $(u,\tau;y,\xi) \in \Gamma_{x}. $  The fact $|dy d\xi| = |du d\tau|$, which follows from the map $(y,\xi) \mapsto (u,\tau)$ with $(u,\tau;y,\xi) \in \Gamma_{x}$ being a symplectomorphism, is used in the last line of (\ref{leray}).
We also note that in terms of geodesic normal coordinates in the ball $B(x_0,\epsilon),$ the Hessian matrix,
\begin{equation*}
(d _{u_i} d_{\xi_j} S)_{1 \leq i,j\leq n} = 2 t_0 (  \, \partial_{u_i} \omega_j(x,u) )_{1 \leq i,j \leq n} + {\mathcal O}(|x|^2) + {\mathcal O}(t_0) \, ),
\end{equation*} 
is non-degenerate for $(x,u) \in B(x_0,\epsilon) \times \bcal^n(\epsilon)$  and $ t_0, \epsilon >0$ small by the admissibility assumption on the family of magnetic potentials $(\omega_u)_{u \in \bcal^n(\epsilon)}$ in  Definition \ref{admissible}.  Hence, $ \tilde{W}_{x,E}(\h) \in  I^{0,-\infty}_{cl}(M \times \bcal^n(\epsilon); \Gamma_{x})$.

 \end{proof}

The next corollary is an immediate consequence of Proposition \ref{keyprop} and the semiclassical Egorov theorem.
 
\begin{cor} \label{upshot}
Given $(x,u) \in B(x_0,\epsilon) \times \bcal^n(\epsilon)$ with $\epsilon >0$ small and $\h \in (0,\h_0],$ the operators
\begin{equation} \label{egorov} 
A_{x, E}(\h):= [ \tilde{W}_{x,E}(\h) ]^* \circ  \tilde{W}_{x,E}(\h)  \in\Psi_{cl}^{0,-\infty}(M), 
\end{equation}
and have  principal symbol 
\begin{equation} \label{principal symbol} \sigma_0(A_{x,E}(\h))(y,\eta) =  |\chi_{E}^{(u(y,\eta))}(x,\eta) |^2 \, \frac{ |d_{x} \pi G_{(u(y,\eta))}^{t_0}(x,\eta) |} { |d_{u} d_{\eta} S|}. \end{equation}  
\end{cor}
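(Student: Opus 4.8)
\textbf{Proof proposal for Corollary \ref{upshot}.}

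The plan is to read off the conclusion directly from Proposition \ref{keyprop} together with the standard composition calculus for semiclassical Fourier integral operators associated to canonical graphs. First I would observe that since $\tilde W_{x,E}(\h) \in I^{0,-\infty}_{cl}(M \times \bcal^n(\epsilon); \Gamma_x)$ with $\Gamma_x$ a canonical graph (locally for $x \in B(x_0,\epsilon)$), its adjoint $[\tilde W_{x,E}(\h)]^* \in I^{0,-\infty}_{cl}(\bcal^n(\epsilon) \times M; \Gamma_x^{-1})$, where $\Gamma_x^{-1}$ is the inverse graph, and the principal symbol of the adjoint is the complex conjugate of $\sigma(\tilde W_{x,E}(\h))$ transported to $\Gamma_x^{-1}$ (see \cite[Chapter 8]{GuSt}, \cite[Chapter 10]{Zw}). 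Composing a canonical graph with its inverse gives the identity relation $\Delta_{T^*M}$, which is clean (the composition is transversal with excess zero), so the composition theorem for $\h$-FIOs applies and yields $A_{x,E}(\h) = [\tilde W_{x,E}(\h)]^* \circ \tilde W_{x,E}(\h) \in \Psi^{0,-\infty}_{cl}(M)$, the membership in $\Psi^{0,-\infty}_{cl}$ (rather than merely $\Psi^{0,0}_{cl}$) being inherited from the compact $\xi$-support of the amplitudes, which in turn comes from the energy-localizing cutoffs $\chi^{(0)}_E, \chi^{(u)}_E$.

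Next I would compute the principal symbol. Under composition of FIOs associated to a graph and its inverse, the symbol of the product is the pointwise product of the two half-density symbols, with the half-density factors on $\Gamma_x$ and $\Gamma_x^{-1}$ pairing to give the canonical half-density (really a full density, i.e. just a function) on $\Delta_{T^*M} \cong T^*M$. Concretely, from the formula \eqref{halfdensity} for $\sigma(\tilde W_{x,E}(\h))$ one has, at a point of $\Gamma_x$ parametrized by $(y,\eta) \in T^*M$,
$$ \sigma(\tilde W_{x,E}(\h))(u,\tau,y,\eta) = \chi_E^{(u)}(x,\eta)\, e^{i\int_0^{t_0}\sigma_{-1}(P_u)(G^t_u(x,\eta))\,dt}\, \frac{|d_x\pi\, G^{t_0}_u(x,\eta)|^{1/2}}{|d_u d_\eta \tilde\phi(y,u,\eta;x)|^{1/2}}\, |dy\,d\eta|^{1/2}. $$
Taking the modulus squared (the oscillatory Egorov phase $e^{i\int_0^{t_0}\sigma_{-1}(P_u)\,dt}$ has modulus $1$ and cancels against its conjugate, and $|dy\,d\eta|^{1/2}\otimes\overline{|dy\,d\eta|^{1/2}} = |dy\,d\eta|$ becomes the identity density) gives exactly
$$ \sigma_0(A_{x,E}(\h))(y,\eta) = |\chi_E^{(u(y,\eta))}(x,\eta)|^2\, \frac{|d_x\pi\, G^{t_0}_{(u(y,\eta))}(x,\eta)|}{|d_u d_\eta S|}, $$
which is \eqref{principal symbol}; here I use the identification $d_u d_\eta \tilde\phi = d_u d_\eta S$ and view $u = u(y,\eta)$ as in the statement of Proposition \ref{keyprop}. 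The invocation of the semiclassical Egorov theorem \cite[Theorem 11.1]{Zw} is what guarantees that the lower-order amplitude data is organized so that $A_{x,E}(\h)$ is a genuine $\h$-pseudodifferential operator of the claimed order with this leading symbol, rather than a more general FIO.

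The only point requiring any care — and it is minor — is the transversality/clean-intersection hypothesis needed to invoke the $\h$-FIO composition theorem: one must check that the composition $\Gamma_x^{-1}\circ\Gamma_x$ is transversal. But this is automatic when $\Gamma_x$ is a canonical graph, since then $\Gamma_x^{-1}\circ\Gamma_x = \Delta_{T^*M}$ with the fibered product being a graph composition, so there is no excess and the half-density product formula holds without any extra stationary-phase corrections. Thus I do not anticipate a substantive obstacle; the corollary is essentially a bookkeeping consequence of Proposition \ref{keyprop}, the adjoint formula, and the composition calculus, with all the real work — namely verifying the canonical graph condition via the non-degeneracy \eqref{graphcondition} coming from admissibility — already carried out in the proof of Proposition \ref{keyprop}.
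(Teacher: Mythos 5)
Your proposal is correct and follows essentially the same route as the paper, which treats the corollary as an immediate consequence of Proposition \ref{keyprop} together with the semiclassical Egorov/FIO composition calculus: since $\Gamma_x$ is a canonical graph, $[\tilde W_{x,E}(\h)]^*\circ \tilde W_{x,E}(\h)$ is an $\h$-pseudodifferential operator whose principal symbol is the modulus squared of the half-density symbol in (\ref{halfdensity}), which is exactly (\ref{principal symbol}). The only ingredient the paper adds beyond this is the remark (used later) that the resulting total symbols depend on $x$ with uniform $C^k$ bounds, which also follows from the same stationary-phase argument you invoke.
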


Since $\Gamma_{x}$ is  a canonical graph, we can write the points in (\ref{principal symbol}) as $(u(y,\eta),\tau(y,\eta);y,\eta )\in \Gamma_{x}$  in terms of the $(y,\eta)$  parametrizing coordinates.

Moreover, it is clear from the stationary phase argument used in the proof of the $\h$-Egorov theorem that the family of $\h$-pseudodifferential  operators $A_{x, E}(\h)$  in Corollary \ref{upshot} depends regularly and locally uniformly on the $x$-parameters in the sense that the total symbol $\sigma(A_{x,E})(y,\eta;\h)$ satisfies the local estimates
\begin{equation} \label{localreg}
\| \sigma (A_{x, E}) \|_{C^k} = {\mathcal O}_k(1) \end{equation} uniformly for $x \in B(x_0,\epsilon)$ and  $\h \in (0,\h_0].$ 
\medskip

To finish the proof of Theorem \ref{supbound}, we note that in view of (\ref{deform1}) and Corollary \ref{upshot} with $x \in B(x_0,\epsilon)$,
\begin{equation} \label{mainproof} \begin{array}{lll}
\int_{\bcal^{n}(\epsilon)} | \phi_{\h}^{(u)}(x)|^{2} \,  du = \int_{\bcal^{n}(\epsilon)} | ( W_{u, E}(\h) \phi_\h  ) (x) |^{2} \, du  \\ \\
= \langle  \tilde{W}_{x, E}(\h) \phi_\h, \tilde{W}_{x, E}(\h) \phi_\h \rangle_{L^2(\bcal^n(\epsilon))} \\ \\
=  \langle A_{x, E}(\h) \phi_\h, \phi_\h \rangle_{L^2(M)}.
\end{array} \end{equation}  Since $A_{x, E}(\h) \in \Psi^{0,-\infty}_{cl}(M)$ satisfies (\ref{localreg}),  by  $L^2$-boundedness (see (\ref{bdd})),
\begin{equation} \label{mainprooflaststep}
\langle A_{x, E}(\h) \phi_\h, \phi_\h \rangle_{L^2(M)} \, \leq C''_{\epsilon} \| \phi_\h \|_{L^2(M)}^2 = {\mathcal O}_{\epsilon}(1) \end{equation} uniformly for $x \in B(x_0,\epsilon).$

To prove the lower bounds on $\int_{\bcal^n(\epsilon)} |\phi_\h^{(u)}(x)|^2 \, du,$ we note that for $|t_0|,\epsilon >0$ sufficiently small,
$$ \frac{ |d_{x} \pi G^{t_0}_{u}(x,\xi) |} { |d_{u} d_{\xi} S|}  \,   |\chi_{E}^{(u(y,\eta))}(x,\eta) |^2  \,  _{|p_{0} = E}  \, \gtrapprox \, \frac{ | d_{x} \pi G^{t_0}_{u}(x,\xi) |} { |d_{u} d_{\xi} S|}  \, _{|p_{0}=E} \, \gtrapprox 1. $$  Thus, from the symbol formula in (\ref{principal symbol}), 
\begin{equation} \label{elliptic}
\sigma_0(A_{x, E}(\h))(x,\xi) \, _{| {p_{0} = E}} 
\gtrapprox 1.  \end{equation}
In view of (\ref{elliptic}),  the weak Garding inequality (\ref{garding}), and the mass concentration of the initial Schr\"odinger eigenfunctions $(\phi_{\h})_{\h \in (0,\h_0]}$ on $\{p_0 = E\}$ (\ref{cut1}), we have for all $\h \in (0, \h_0]$ sufficiently small,

\begin{equation} \label{lower}
\langle A_{x, E}(\h) \phi_\h, \phi_\h \rangle \gtrapprox 1 \end{equation}

\medskip

\noindent uniformly in $x \in B(x_0,\epsilon).$
Substitution of (\ref{lower}) into the last line of (\ref{mainproof}) gives the lower bound in Theorem \ref{supbound} uniformly for $x \in B(x_0,\epsilon).$


 Since $M$ is compact, we  select a cover of finitely many small balls $B_{j}(x_0,\epsilon)$ for  $j=0,...,N$ and repeat the above analysis above in each ball using the admissibility assumption (\ref{admissible}) to ensure that we can choose an $n$-tuple $(u_{i_1},...,u_{i_n})$ so that the non-degeneracy condition in (\ref{jac}) is satisfied in each ball, $B_j.$ This finishes the proof of Theorem \ref{supbound}.  \qed


\section{The quantum ergodic case} \label{ergodic} 
We assume here that $(M,g)$ is a smooth, compact $n$-manifold and  that the bicharacteristic flow of $p_0(x,\xi) = |\xi|^2_g + V(x)$ on $p_0^{-1}(E)$ given by
$$ G^t_0 = \exp t H_{p_0}: p_0^{-1}(E) \rightarrow p_0^{-1}(E)$$ is ergodic. As before, we consider a sequence $(\phi_{\h})_{\h \in (0,\h_0]}$  of $L^2$-normalized eigenfunctions of $P_0(\h) = -\h^2 \Delta_g^2 +V(x)$ with $P_0(\h) \phi_\h = E(\h) \phi_{\h}$ and $E(\h) = E + o(1)$ as $\h \rightarrow 0^+.$ We say that the sequence $(\phi_{\h})_{\h \in (0,\h_0]}$  is {\em quantum ergodic} if for any $B(\h) \in \Psi^{0,-\infty}_{cl}(M),$

  \begin{equation} \label{qe}
 \langle B(\h) \phi_\h, \phi_\h \rangle_{L^2} \sim_{\h \rightarrow 0} c_E \int_{p_0^{-1}(E)} \sigma_0(B(\h))(x,\xi) \, d\omega_E \end{equation}
 where $d\omega_E$ is Liouville measure and $c_{E}=  ( \text{vol}_{\omega_E}(p_{0}^{-1}(E)) )^{-1}.$  
 
 In the homogeneous case where $P_0(\h) = -\h^2 \Delta_g$ and $E=1,$ it is well-known that \cite{CV,Sch,Z2}   (\ref{qe}) is satisfied for a full asymptotic density of eigenfunctions.  The question of whether the asymptotic identity in (\ref{qe}) holds for {\em all} eigenfunctions is referred to as {\em quantum unique ergodicity} \cite{S2}.
 

 Let $(\phi_{\h_{j_k}})_{k=1}^{\infty}$ be any subsequence of eigenfunctions with $\h_{j_k} \rightarrow 0 $ and $k \rightarrow \infty.$ One forms sequence of measures $d \mu_{\h_{j_k}}:= |\phi_{\h_{j_k}}(x)|^2 \, d vol (x)$ with the associated microlocal lifts $d\tilde{\mu}_{\h_{j_k}}$ defined by
 \begin{equation} \label{lift}
 d\tilde{\mu}_{\h_{j_k}}(a) := \langle Op_{\h_{j_k}}^{aw} (a) \phi_{\h_{j_k}}, \phi_{\h_{j_k}} \rangle_{L^2(M)}, \end{equation}
 where $a \in C^{\infty}(T^*M)$ and  $Op_{\h}^{aw}(a)$ denotes the non-negative semiclassical anti-Wick quantization \cite{Zw}. If  the measures $d\tilde{\mu}_{\h_{j_k}}$ converge in the weak-$*$ sense, one forms the associated weak-$*$ limit
 \begin{equation} \label{defect}
 d\tilde{\mu}: = w-\lim_{k \rightarrow \infty} d\tilde{\mu}_{\h_{j_k}}. \end{equation}

 In the quantum ergodic case, our main result in Theorem \ref{supbound} yields averaged pointwise asymptotics for the deformed eigenfunctions. We state the precise result here for future reference. In our case, we use the perturbed eigenfunctions $\phi_{\h}^{(u)}$ to form the microlocally lifted measures
 \begin{equation} \label{lift2}
 d\tilde{\mu}^{(u)}_{\h}(a):= \langle Op_{\h}^{aw}(a) \phi_{\h}^{(u)}, \phi_{\h}^{(u)} \rangle_{L^2(M)}. \end{equation}
 The following is local in $x \in B(x_0,\epsilon) \subset M$ and so, just as in Proposition \ref{keyprop},  it suffices to assume that $k=n$.

 \begin{theo} \label{ergodic case} Assume that $G^t_0:p_0^{-1}(E) \rightarrow p_0^{-1}(E)$ is ergodic and that $(\phi_\h)_{\h \in (0,\h_0]}$ is a quantum ergodic sequence of $L^2$-normalized eigenfunctions of the Schr\"{o}dinger operator $P_0(\h) = -\h^2 \Delta_g + V(x)$ with $P_0(\h) \phi_{\h} = E(\h) \phi_{\h}$ and $E(\h) = E + o(1).$ Then, under the same assumptions as in Theorem \ref{supbound} on the magnetic Schr\"{o}dinger operators $P_u(\h),$ it follows that
 
  (i) for all $u \in \bcal^n(\epsilon)$ sufficiently small, the sequence $(\phi_\h^{(u)})_{\h \in [0, \h_0]}$ is itself quantum ergodic up to a smooth density; that is,
 $$ w-\lim_{\h \rightarrow 0}  d \tilde{\mu}_{\h}^{(u)} =  \rho_u \, d\omega_{E,u},$$
 where $d\omega_{E,u} = \frac{dx d\xi}{d (p_u -E)} $ is Liouville measure on $p_u^{-1}(E)$  and we write $d\omega_E = d\omega_{E,0}$ for short.  Moreover, the density  $\rho_u$ is given by the Jacobian (i.e. the Radon-Nikodym derivative)
$$ \frac{  d (  ( G_{u}^{t_0} )^{*} \omega_{E} ) }{ d \omega_{E,u} }.  $$  
 (ii) For  $x \in B(x_0,\epsilon),$  one has the averaged pointwise asymptotics
 $$ \int_{\bcal^n(\epsilon)} |\phi_\h^{(u)}(x)|^{2} \, du \sim_{\h \rightarrow 0^+} c_E \, \omega_{\infty}(x),$$
 where $$ \omega_{\infty}(x) =   \int_{p_0^{-1}(E) }  \sigma_0(A_{x, E}(\h))   \, d\omega_E,$$
 with $\sigma_0(A_{x, E}(\h))$ given in Corollary \ref{upshot}.
 \end{theo}
 \begin{proof}
 The proof of (i) is an easy consequence of the semiclassical Egorov theorem. Indeed,
 for  $B(\h) \in \Psi^{0,-\infty}_{cl}(M)$ and any quantum ergodic sequence of $\phi_\h$'s, the $\h$-Egorov theorem gives
 \begin{equation} \label{qe1} \begin{array}{ll}
 \langle B(\h) \phi_h^{(u)}, \phi_\h^{(u)} \rangle_{L^2(M)} = \langle W_{u, E}(\h)^* B(\h) W_{u, E}(\h) \phi_\h, \phi_\h \rangle_{L^2(M)}  
 \\ \\ \sim_{\h \rightarrow 0} c_E \int_{p_{0}^{-1}(E)}  ( G_{u}^{t_0} )^* \sigma_0(B(\h)) d\omega_{E}.
 \end{array} \end{equation}
 In (\ref{qe1}), $G_u^{t}: T^*M \rightarrow T^*M$ denotes the time-$t$ bicharacteristic flow of the magnetic Hamiltonian $p_{u}.$  The flow-out $G_{u}^{t_0}: p_{0}^{-1}(E) \rightarrow G_{u}^{t_0} ( p_{0}^{-1}(E) )$ is  a diffeomorphism onto its image. Moreover, since $dp_{u} \neq 0$ on the set $\{p_u =E\},$ it follows  by Taylor expansion in $u$ and the Implicit Function Theorem that for $\epsilon >0$ small,   $p_{u}^{-1}(E)$ is diffeomorphic to $p_{0}^{-1}(E)$. Consequently, $p_u^{-1}(E)$ is diffeomorphic to $G_{u}^{t_0} ( p_{0}^{-1}(E) )$   and so, by (\ref{qe1}) and the change of variables formula,
 \begin{equation} \label{qe2} \begin{array}{ll}
 \langle B(\h) \phi_\h^{(u)}, \phi_\h^{(u)} \rangle_{L^2(M)} \sim_{\h \rightarrow 0^+} c_E \int_{ G_{u}^{t_0}( p_{0}^{-1}(E) ) }  \sigma_0(B(\h)) \, d (  ( G_{u}^{t_0} )^{*} \omega_E ) \\ \\
 =  c_E \int_{ p_{u}^{-1}(E) }  \sigma_0(B(\h)) \, \frac{ d (  ( G_{u}^{t_0} )^{*} \omega_E ) }{ d \omega_{E,u} } \, d\omega_{E,u}.\end{array} \end{equation}

  The proof of (ii)  follows from Theorem \ref{supbound} and (\ref{egorov}), since for the quantum ergodic sequence of eigenfunctions $(\phi_\h)_{\h \in [0, \h_0]}$ and {\em all} $x \in M,$
  $A_{x, E}(\h) \in \Psi^{0,-\infty}_{cl}(M)$ with uniform symbolic $C^k$-bounds. Consequently,
\begin{equation} \label{ergupshot} \begin{array}{ll}
\int_{\bcal(\epsilon)} |\phi_\h^{(u)}(x)|^{2} \, du  =  \langle A_{x, E}(h) \phi_\h, \phi_\h \rangle_{L^2(M)} \\ \\ \sim_{\h \rightarrow 0^+} c_E   \int_{p_0^{-1}(E) }   \sigma_0( A_{x, E}(\h)) \, d\omega_E.  \end{array} \end{equation}
\end{proof}

The ergodic case will be discussed in more detail in \cite{CJT}.


\section{Some Examples} \label{examples}

In this section, we explicitly compute the averaged pointwise bounds in the case of the 1-dimensional magnetic harmonic oscillator and the magnetic Laplacian on $S^2.$   Both examples have worst-case $L^{\infty}$-blowup for the unperturbed  eigenfunctions, $\phi_\h.$


\subsection{Harmonic Oscillator}
Consider the 1-dimensional harmonic oscillator $P(x,\hbar D_x)=\frac{1}{2}(\hbar^2 D_x^2 + x^2)$ and the associated $L^2$-normalized ground state eigenfunction $\phi_{\hbar}(x)= (\pi\hbar)^{-\frac{1}{4}} e^{-\frac{x^2}{2\hbar}}$ with eigenvalue $\frac{\hbar}{2}$. This eigenfunction $\h$-microlocally concentrates near the critical point $p^{-1}(0) = \{ (0,0) \}.$  We show that the family of eigenfunctions $(\phi_\h^{(u)})_{\h \in (0, \h_0]}$  corresponding to the magnetic perturbations with
$$P_{u}(x,\h D_x) = P(x, \h D_{x}-u) = \frac{1}{2} [ ( \h D_x - u)^2 + x^2 ]$$
satisfies the bounds in Theorem \ref{supbound}.

Following \cite[page 129]{GS} and utilizing the formulas in (\ref{symbol1}), one easily derives the generalized Mehler formula for the perturbed propagator $e^{-itP_u(\hbar)/\hbar}$ with  $P_u(\hbar) = P(x,\hbar D_x-u).$ Explicitly,
\begin{equation} \label{master}
e^{-itP_u(\hbar)/\hbar}(\phi_{\hbar})(x) = \frac{1}{(2\pi \hbar)^{1/2}}\int_{\R} e^{i \Phi(t,x; \eta; u)/\hbar} a(t,x ;\eta ;u) \mathcal{F}_{\hbar}(\phi_{\hbar})(\eta) d \eta.
\end{equation}
Here, $\mathcal{F}_h(\phi_h)(\eta)$ denotes the semiclassical Fourier transform
$\frac{1}{(2 \pi \hbar)^{1/2}} \int_{\R} e^{-i y \eta/ \hbar} \phi_{\hbar}(y) dy$ for $\phi_h \in \mathcal{S}(\mathbb{R})$. The phase function is
\begin{equation} \label{harmonprop}
\Phi(t,x,\eta;u) = -\frac{ u^2  \sin t - 2 \eta u \sin t + (x^2  + \eta^2) \sin t  - 2  x u \cos t + 2x (u - \eta) }{2\cos t},
\end{equation}
and  $a(t,x,\eta;u) =  |\cos t|^{1/2}.$

When $u=0$ and $t \in (0, \pi)$ with $\hbar=1$, the operators in (\ref{master}) are elements of the unitary group generated by the standard harmonic oscillator  (see  \cite[p.129]{GS}).  Our aim is to compute 
\begin{equation} \label{oscillint}
I(\hbar,t,x) = \int_{-\epsilon}^{\epsilon} | e^{-itP_u(\hbar)/\hbar}(\phi_{\hbar})(x) |^2  \, du,
\end{equation}
where we fix $t=t_0 \in (0,\frac{\pi}{2})$ for the remainder of this computation.
Since ${\mathcal F}_{\h} (\phi_{\h})(\eta) =(\pi\hbar)^{-\frac{1}{4}} e^{-\frac{\eta^2}{2\hbar}},$ substitution in (\ref{master}) gives
\begin{equation} \label{master2}
e^{-it_0P_u(\hbar)/\hbar}(\phi_{\hbar})(x) = \frac{1}{(2\pi \hbar)^{3/4}}\int_{\R} e^{ \frac{i}{\h}  [ \Phi(t_0,x; \eta; u) + i \frac{\eta^2}{2} ]  } a(t_0,x,\eta ;u) d \eta.
\end{equation}
The  solutions $\eta_c = \eta_c(x;u,t_0) $ of the critical point equation $$d_{\eta} [ \Phi + i \frac{\eta^2}{2} ] = 0$$ are
$$ \eta_c(x;u,t_0) = i ( u \sin t_0 + x) e^{-it_0}.$$
Since $\Im  [ \Phi(t_0,x,\eta;u) + i \frac{\eta^2}{2} ] = \frac{\eta^2}{2} \geq 0,$ an application of steepest descent in (\ref{master2}) gives an asymptotic formula of the form


\begin{equation} \label{ho1} \begin{array}{ll}
  e^{-it_0P_u(\hbar)/\hbar}(\phi_{\hbar})(x)   \sim_{\h \rightarrow 0^+}  \hbar^{-\frac{1}{4}}  c(t_0) \, e^{  [ \Phi_2(x;t_0,u) + i \Phi_1(x;t_0,u) ]/\hbar} \end{array}
\end{equation}
where  $c(t_0)$ is a non-zero constant depending only on $t_0,$ $\Im \Phi_1 =0$, and
\begin{eqnarray} \label{real phase}
 \nonumber \Phi_2(x;u,t_0) &=& - \Im \left( u  \eta_c  \tan t_0 - \frac{\eta_c^2}{2} \tan t_0 - \frac{ x \eta_c  }{\cos t_0} + i \frac{\eta_c^2}{2} \right) \\ 
 &=& -\Im \left(\frac{1}{2}(\tan t_0 + i)(u\sin t_0+x)^2 \right) = -\frac{1}{2} ( u \sin t_0 + x)^2.  
 \end{eqnarray}

There is an analogous formula for $\Phi_1,$ but it is irrelevant for our purposes.  We substitute the WKB formula (\ref{ho1}) into (\ref{oscillint}) and square the modulus of the integrand. Since $|e^{i \Phi_1 /\hbar}| =1,$ the oscillatory factor in (\ref{ho1}) drops out and one simply gets 
\begin{eqnarray} \label{laplace}
I(\hbar,t_0,x) \sim  \hbar^{-\frac{1}{2}} |c(t_0)|^2 \, \int_{-\epsilon}^{\epsilon}   e^{- \frac{ ( u \sin t_0 +  x )^2 }{2\hbar }} \, du.
\end{eqnarray}
By steepest descent,  the Laplace integral in (\ref{laplace}) gives an additional factor of $\hbar^{1/2}$ and it follows that 
\begin{equation} \label{sd}
I(\hbar,t_0,x) \sim  |c(t_0)|^2 |\sin t_0|^{-1} \asymp 1, \end{equation} locally and uniformly in $x \in \R$,
and (\ref{sd}) is consistent with Theorem \ref{supbound}.

\begin{rem} \label{big point}  Let $\ical \subset \R$ be any closed interval containing $[-\epsilon,\epsilon].$  From (\ref{ho1}) it is clear  that for each $u \in (-\epsilon,\epsilon),$ 
$$ \sup_{x \in \ical} | e^{-it_0P_u(\hbar)/\hbar} \phi_{\hbar}(x)| \sim \hbar^{-1/4}.$$
Consequently,
$$ \int_{-\epsilon}^{\epsilon} \sup_{x \in \ical} | e^{-it_0 P_u(\hbar)/\hbar} \phi_{\hbar}(x) |^2  du \sim {\hbar}^{-1/2}$$
and so the $L^{\infty}$-bounds of the deformed family of eigenfunctions $(\phi_{\hbar}^{(u)})_{\h \in (0, \h_0]}$ are  no  better than those of the ground state eigenfunctions $\phi_{\hbar}(x) = (\pi\hbar)^{-1/4} e^{-x^2/2\hbar}$. However, it is immediate from (\ref{sd}) that \begin{equation}
\sup_{x \in \ical} \int_{-\epsilon}^{\epsilon} | e^{-itP_u(\hbar)/\hbar} \phi_{\hbar}(x)|^2 du \asymp 1.                                                                                                                                                                                                                                                                                                                                                                                                                                          \end{equation}

\end{rem}

\subsection{Zonal harmonics on $S^2$}
We consider here the semiclassical Laplacian on the sphere $(S^2;round)$ with
\begin{equation}
P_0(\h) = -\h^2\Delta_{S^2} = -\h^2(\frac{1}{\sin^2(\phi)}\frac{\partial^2}{\partial \phi^2} + \frac{\partial^2}{\partial \theta^2}) - \hbar^2\cot(\phi) \frac{\partial}{\partial \phi}. 
\end{equation} 
where $(\theta,\phi) \in [0,2\pi] \times [0,\pi)$ denote usual spherical polar coordinates chosen so that  $(z_1,z_2,z_3) \in S^2$ is given  $z_1=\cos \theta \cos \phi, z_2 = \sin \theta \cos \phi, z_3 = \sin \phi.$  Then,  $x_1 = \phi \cos \theta, x_2 = \phi \sin \theta$ are the geodesic normal coordinates centered at the north pole $(0,0,1)$ and we choose $\h^{-1} \in \{ \sqrt{n(n+1)} \}_{n=1}^{\infty}.$ In the following, we continue to denote the geodesic normal coordinates centered at $(0,0,1) \in S^2$  by $x = (x_1,x_2).$

It is well-known that the $L^2$-normalized zonal spherical harmonic of degree $n$ is given by the formula 
\begin{equation} \label{intformula}
 Z_{n}(x) = P_{n}(\cos |x|) = 
(2 \pi n)^{1/2} \int_{-\pi}^{\pi} (\cos |x| + i \sin |x| \cos \tau)^{n} d \tau
\end{equation}
saturates the maximal H\"{o}rmander sup bound at $(0,0,1).$ Indeed it is obvious from (\ref{intformula}) that $|Z_{n}(0) | = (2 \pi)^{3/2}\sqrt{n}  \sim \h^{-1/2}. $

  The method of steepest descent can  be applied in (\ref{intformula}) away from the north and south poles, corresponding to $|x| = \phi$ away from $0$ and $\pi$, and one easily gets that
  \begin{equation} \label{zonal3}
 \sup_{|x| \geq \epsilon} |Z_{\h}(x)| = {\mathcal O}_{\epsilon}(1) \end{equation}
 for $\h \in (0,\h_0].$ Hence, one can ignore this range where the eigenfunction is already uniformly-bounded prior to deformation.
Let $(Z_\h^{(u)})_{\h \in (0, \h_0]} := ((\exp -it_0 P_u(\h)/\h) Z_{\h})_{\h \in (0, \h_0]}$ be the deformed eigenfunctions where, for simplicity, we choose the (constant field) magnetic Schr\"odinger operators $P_u(\h): C^{\infty}(S^2) \rightarrow C^{\infty}(S^2)$ with
$$P_u(\h) = (\h d_x + i u)^* (\h d_x + iu); \,\, u = u_1 dx_1 + u_2 dx_2.$$ We verify directly that
\begin{equation} \label{zonal2}
\int_{\bcal^2(\epsilon)} | Z_\h^{(u)}(x)|^2 du \asymp 1  \end{equation}
uniformly for $x \in S^2.$ In particular, when $x$ corresponds to $(0,0,1)$, the worst case eigenfunction blow-up is destroyed by averaging over the magnetic field $u = u_1 dx_1 + u_2 dx_2 \in \Omega^1(S^2).$  In view of (\ref{zonal3}) it suffices to assume that $|x| = o(1)$ as $\h \rightarrow 0$ and we do so from now onward.
We split the ball $|x| =o(1)$ around the north pole into two pieces: Fix $\alpha \in (1/2,1)$ and first suppose  $|x| \gtrapprox \h^{\alpha}.$ In that case, an application of steepest descent in (\ref{intformula}) gives 
\begin{equation} \label{range1}
Z_\h(x)  \sim_{\h \rightarrow 0} \frac{ e^{i |x|/\h}  + e^{-i |x|/\h} }{ \sqrt{ \sin |x| } } \sim_{\h \rightarrow 0} \frac{ e^{i |x|/\h} + e^{-i |x|/\h} }{ \sqrt{ |x| } }. \end{equation}
In the complementary range where $|x| \lessapprox \h^{\alpha},$ we write the oscillatory integral (\ref{intformula}) as
\begin{equation} \label{range2} \begin{array}{lll}
Z_\h(x) = (2 \pi \h)^{-1/2} \int_{-\pi}^{\pi} \exp [ \h^{-1} \log (\cos |x| + i \sin |x| \cos \tau) d \tau \\ \\
\sim (2\pi \h)^{-1/2} \int_{-\pi}^{\pi} e^{i \sin |x| \cos \tau /\h} d\tau  \sim (2\pi \h)^{-1/2} \int_{-\pi}^{\pi} e^{i |x| \cos \tau /\h} d\tau. \end{array}
\end{equation}
The last line of (\ref{range2}) follows by Taylor expansion of the phase  function $\Phi(\tau;x) :=  \log (\cos |x| + i \sin |x| \cos \tau)$ around $x=0$ using that $\cos |x| = 1 + {\mathcal O}(|x|^2)$ and $ |x|^{-1} \sin |x|  = 1 + {\mathcal O}(|x|^2).$ The change of variables $\tau \mapsto \tau- x$ in (\ref{range2})  and application of the cosine law implies that  when $|x| \lessapprox \h^{\alpha},$
\begin{equation} \label{zonalupshot}
Z_{\h}(x) \sim (2\pi \h)^{-1/2} \int_{S^1} e^{i \langle x, \omega \rangle  / \h} \, d\omega. \end{equation}
Here we write $d\omega = d\tau$ for arc-length measure. In view of (\ref{range1}), it follows by stationary phase in $\omega \in S^1$ that the same asymptotic formula (\ref{zonalupshot}) is also valid when $|x| \gtrapprox \h^{\alpha}.$ 

Substitution of (\ref{zonalupshot}) and the general formula for the small-time propagator $e^{it_0 P_u(\h)/\h}$ gives 
\begin{equation} \label{computation1} \begin{array}{ll}
Z_\h^{(u)}(x) \sim (2\pi \h)^{-5/2} \int_{\R^2} \int_{S^1} \int_{|y| < \epsilon} e^{ i \h^{-1} [ \langle x -y, \xi \rangle - t_0 |\xi + u|^2 + {\mathcal O}(t_0^2) + {\mathcal O}(|x|^2)  +  \langle y, \omega \rangle ]  } \, a(x,\xi,t_0) \, dy d\omega d\xi \\ \\
=(2\pi \h)^{-5/2} \int_{\R^2} \int_{S^1} \int_{|y| < \epsilon} e^{ i \h^{-1} [ \langle x, \xi \rangle - t_0 |\xi + u|^2 + {\mathcal O}(t_0^2) + {\mathcal O}(|x|^2)  +  \langle y, \omega - \xi \rangle ]  } \, a(x,\xi,t_0) \, dy d\omega d\xi.
\end{array} \end{equation}
For $|x|, |t_0|$ sufficiently small, we apply stationary phase in $(y,\xi)$ in (\ref{computation1}) and get
\begin{equation} \label{computation2} \begin{array}{ll}
Z^{(u)}_\h(x) \sim (2\pi \h)^{-1/2} \int_{S^1} e^{i h^{-1} \Phi(x,\omega;t_0)}  \, a(x,\omega,t_0) \, d\omega, \end{array} \end{equation}
where $a(x,\omega,t_0) \asymp1$ and
\begin{equation} \label{phase}
 \Phi(x,\omega,t_0) = \langle x,\omega \rangle- t_0 |\omega +u |^2 + {\mathcal O}(t_0^2 +|x|^2). \end{equation}
From (\ref{computation2}), it follows that
\begin{eqnarray} \label{computation3} 
 \int_{\bcal^2(\epsilon)} && | Z^{(u)}_\h(x)|^2 \, du \sim \\
 && (2\pi \h)^{-1} \int_{\bcal^2(\epsilon)} \int_{S^1} \int_{S^1} e^{i \h^{-1}  [ \Phi(x,\omega,t_0) - \nonumber \Phi(x,\omega',t_0) ]}  \, a(x,\omega,t_0) a(x,\omega',t_0) \, d\omega d\omega' \, du_1 du_2. 
 \end{eqnarray}
 Writing $\omega = (\cos \tau, \sin \tau),$ the bound in (\ref{zonal2}) follows from (\ref{computation3}) and an application of  stationary phase in $(u_1,\tau)$ (or alternatively, $(u_2,\tau)$ depending on whether or not $\cos \tau = 0$ ).  This is consistent with our main result.
 
 \begin{rem} We note from (\ref{computation2}) that at the pole $x=0$ itself,
 $$ Z_\h^{(u)}(0) \sim (2\pi \h)^{-1/2} \int_{S^1} e^{i \h^{-1} [ -t_0 |\omega + u|^2 + {\mathcal O}_u(t_0^2) ]} a(0,\omega,t_0) \, d\omega $$
 $$ \sim (2\pi \h)^{-1/2} e^{i \h^{-1} t_0 (1 + |u|^2)} \, \int_{S^1} e^{-i \h^{-1}  t_0  [ \langle \omega, u \rangle + {\mathcal O}_u(t_0) ] } \, a(0,\omega,t_0) \,d\omega.$$  
 Thus, it follows by an application of stationary phase in the last integral that for $|t_0|  \ll \epsilon$ and $\h \in (0,\h_0(\epsilon)],$
 $$\sup_{|u| \geq \frac{\epsilon}{2} } | Z_\h^{(u)}(0)|  = {\mathcal O}_{\epsilon}(1).$$  
 By a similar argument, one can show that for $\epsilon >0$ small and  with large enough constant $C_0 >0,$ 
 \begin{equation} \label{locunif}
 \sup_{|x| \leq \frac{\epsilon}{C_0}} \sup_{|u| \geq \frac{\epsilon}{2}}   | Z_{\h}^{(u)}(x)|  = {\mathcal O}_{\epsilon}(1). \end{equation}

\noindent At present, we do not know whether such deterministic local supremum bounds for the family $(\phi_\h^{(u)})_{\h \in (0, \h_0]}$  hold in greater generality.
 
\end{rem}

\end{document}